\title[Pseudo-Riemannian Symmetries on Heisenberg groups]{Pseudo-Riemannian Symmetries on Heisenberg groups}
\author[Goze - Piu - Remm]{Michel Goze, Paola Piu and Elisabeth Remm}
\thanks{The first author was supported by: Visiting professor program, Regione Autonoma della Sardegna - Italy. The second author was supported by a Visiting Professor fellowship at Universit\'e de Haute Alsace - Mulhouse in February 2012 and March 2013  and by GNSAGA(Italy)}
\address{Dipartimento di Matematica e Informatica, Universit\`a degli Studi di Cagliari, Via Ospedale 72, 09124 Cagliari, ITALIA}
\email{piu@unica.it}
\subjclass[2000]{22F30, 53C30, 53C35,17B70}
\keywords{$\Gamma$-symmetric spaces, Heisenberg group, Graded Lie algebras, Riemannian and Pseudo-Riemannian structures.}
\address{Laboratoire de Math\'ematiques et Applications,
        Universit\'e de Haute Alsace, Facult\'e des Sciences et
        Techniques, 4, rue des Fr\`eres Lumi\`ere,
        68093~Mulhouse~cedex, France.}
\email{Michel.Goze@uha.fr, Elisabeth.Remm@uha.fr}
\begin{document}

\newtheorem{theorem}{Theorem}
\newtheorem{acknowledgement}[theorem]{Acknowledgement}
\newtheorem{algorithm}[theorem]{Algorithm}
\newtheorem{axiom}[theorem]{Axiom}
\newtheorem{case}[theorem]{Case}
\newtheorem{claim}[theorem]{Claim}
\newtheorem{conclusion}[theorem]{Conclusion}
\newtheorem{condition}[theorem]{Condition}
\newtheorem{conjecture}[theorem]{Conjecture}
\newtheorem{corollary}[theorem]{Corollary}
\newtheorem{criterion}[theorem]{Criterion}
\newtheorem{definition}[theorem]{Definition}
\newtheorem{example}[theorem]{Example}
\newtheorem{exercise}[theorem]{Exercise}
\newtheorem{lemma}[theorem]{Lemma}
\newtheorem{notation}[theorem]{Notation}
\newtheorem{problem}[theorem]{Problem}
\newtheorem{proposition}[theorem]{Proposition}
\newtheorem{remark}[theorem]{Remark}
\newtheorem{solution}[theorem]{Solution}
\newtheorem{summary}[theorem]{Summary}
\newcommand\R{\mathbb{R}}
\newcommand\HH{\mathbb{H}}
\newcommand\g{\frak{g}}
\newcommand\K{\mathbb{K}}
\newcommand\C{\mathbb{C}}
\newcommand\Z{\mathbb{Z}}
\newcommand\si{\sigma}
\newcommand\ga{\Gamma}
\newcommand{\zz}{$\Z^2_2$-symmetric space}
\newcommand\z{ \mathbb{Z}_2^2}
\newcommand\p{\mathcal{P}}
\newcommand{\pf}{\noindent {\it Proof. }}
\newcommand\f{\frak{t}}
\newcommand\h{\frak{h}}
\newcommand\m{\frak{m}}
\newcommand\ad{\mathrm{ad}}
\setlength{\oddsidemargin}{0in}
\setlength{\evensidemargin}{.25in}
\setlength{\textwidth}{6.25in}

\pagestyle{myheadings}

\begin{abstract}
The notion of  $\Gamma$-symmetric space is a natural generalization of the classical notion of  symmetric space based on $\Z_2$-grading of Lie algebras. In our case, we consider homogeneous spaces $G/H$ such that the Lie algebra $\g$ of $G$ admits a $\Gamma$-grading where $\Gamma$ is a finite abelian group.
In this work we study Riemannian metrics and Lorentzian metrics on the Heisenberg group $\mathbb{H}_3$ adapted to the symmetries of a $\Gamma$-symmetric structure on $\mathbb{H}_3$. We prove that the classification of $\z$-symmetric Riemannian and Lorentzian metrics on $\mathbb{H}_3$ corresponds to the classification  of left-invariant Riemannian and Lorentzian metrics, up to isometry. We study also the $\Z_2^k$-symmetric structures on $G/H$ when $G$ is the $(2p+1)$-dimensional Heisenberg group for $k \geq 1$. This gives examples of non riemannian symmetric spaces. When $k \geq 1$, we show that there exists a family of flat and torsion free affine connections adapted to the $\Z_2^k$-symmetric structures.
\end{abstract}

\maketitle

\bibliographystyle{plain}

\section{Introduction}

A symmetric space can be considered as a reductive homogeneous space $G/H$ on which acts an abelian subgroup $\Gamma$ of the automorphisms group of $G$ with $\Gamma$ isomorphic to $\Z_2=\Z/ 2\Z$ and $H$ the subgroup of $G$ composed of the fixed points of the automorphisms belonging to $\Gamma.$ If we suppose that the Lie groups $G$ and $H$ are connected and that $G$ is simply connected, it is equivalent to provide $G/H$ with a symmetric  structure or to provide the Lie algebra $\mathfrak{g}$ of $G$ with a $\Z_2$-graduation $\g =\g_0 \oplus \g_1$ with $[\g_i,\g_j]=\g_{i+j(mod \ 2)}.$  Riemannian symmetric spaces form an interesting class of symmetric spaces. But there are symmetric spaces which are not Riemannian symmetric. We describe examples when $G$ is the Heisenberg group. Nevertheless, a symmetric space is always provided with an affine connection $\nabla$ which is torsion free and has a curvature tensor satisfying $\nabla R=0$. When the symmetric space is Riemannian, this connection is the Levi-Civita connection of the metric. A natural generalization of the notion of symmetric space can be obtained by considering that the subgroup $\Gamma$ is abelian, finite and not necessarily isomorphic to $\Z_2.$ When $\Gamma$ is cyclic isomorphic to $\Z_k$ it corresponds to the generalized symmetric spaces of \cite{[K],L.O}. These structures are also characterized  by $\Z_k$-graduations of the complexified Lie algebra $\g_\C=\g \oplus \C$ of $\g.$ We get another interesting case when $\Gamma=\Z_2^k$ because the characteristic graduation is defined on $\g$ and not on $\g_\C.$ When $\g$ is simple the $\Z_2^2$-graduations of $\g$ have been classified as well as the $\Z_2^2$-symmetric spaces $G/H$ when $G$ is simple connected (\cite{[B.G], [Ko]}). All these spaces are Riemannian (see \cite{[P.R]}). But, in this paper, we provide some examples of non Riemannian symmetric spaces studying symmetric spaces  $G/H$ when $G$ is the Heisenberg group $\mathbb{H}_{2p+1}.$ We study also $\Z^k_2$-symmetric structures on these homogeneous spaces showing, in particular, that these spaces are Riemannian and affine. But contrary to the symmetric case, there exist on these spaces affine connections different from the canonical (or the Levi-Civita) connection and more adapted to the symmetries of $G/H$ that the canonical one. We describe  these connections and we prove that there exists connections adapted to the $\Z_2^k$ symmetries which are flat and torsion free.

\section{$\Z_2^k $-symmetric spaces}
\subsection{Recall on symmetric and Riemannian symmetric spaces}
A symmetric space is a triple $(G,H,\sigma)$ where $G$ is a connected Lie group, $H$ a closed subgroup of $G$ and $\si$ an involute automorphism of $G$ such that 
$G_e^\si \subset H \subset G^\si$ where $G^\si=\{x \in G, \ \si (x)=x\}$, $G_e^\si$ the identity component of $G^\si$. If $(G,H,\si)$ is a symmetric space, at each point $\overline{x}$ of the homogeneous manifold $M=G/H$ corresponds  an involutive diffeomorphism $\si_{\overline{x}}$ which has $\overline{x}$ as an isolated fixed point. Let $\g$ and $\h$ be the Lie algebras of $G$ and $H$. The automorphism $\si \in Aut(G)$ induces an involutive automorphism of $\g$, denoted by $\si$ again,  such that $\h$ consists of all elements of $\g$ which are left fixed by $\si$. We deduce that the Lie algebra $\g$ is $\Z_2$-graded:
$$\g=\h \oplus \m$$
with $\m=\{X \in \g, \ \si (X)=-X\}$ and $[\h,\m] \subset \m$, $[\m,\m] \subset \h$ and $[\h,\h] \subset \h$. If we assume that $G$ is simply connected and $H$ connected, then the $\Z_2$-grading $\g=\h \oplus \m$ defines a  symmetric space stucture $(G,H,\si)$. Thus, under these hypothesis, it is equivalent to speak about $\Z_2$-grading of Lie algebras or symmetric spaces.

An important class of symmetric spaces consists of Riemannian symmetric spaces. A Riemannian symmetric space is a Riemannian manifold $M$ whose curvature tensor field associated with the Levi-Civita connection is parallel. In this case the geodesic symmetry at a point $u \in M$ attached to the Levi-Civita connection is an isometry and, if we fix $u$, it defines an involutive automorphism $\si$ of the largest group of isometries $G$ of $M$ which acts transitively on $M$. We deduce that $M$ is an homogeneous manifold $M=G/H$ and the triple $(G,H,\si)$ is a symmetric space.  Let us note that, in this case, $H$ is compact. When $H \cap Z(G) =\{e\},$ this last condition is equivalent to $ad_{\g}(H)$ compact. Here $Z(G)$ denotes the center of $G$. Conversely, if $(G,H,\si)$ is a symmetric  space such that the image $ad_\g (H)$ of $H$ under the adjoint representation of $G$ is a compact subgroup of $Gl(\g)$, then $\g$ admits an $ad_{\g}(H)$-invariant inner product and  $\h$ and $\m$ are orthogonal with respect to it. This inner product restricted to $\m$ induces an $G$-invariant Riemannian metric on $G/H$ and $G/H$ is a Riemannian symmetric space.  For example, if $H$ is compact, $ad_{\g}(H)$ is also compact  and $(G,H,\si)$ is a Riemannian symmetric space. Assume now that $H$ is connected, then $ad_{\g}(H)$ is compact if and only if the connected Lie group associated with the linear algebra $ad_\g(\h)=\{ad X, \ X \in \h\}$ is compact. In this case, $\g$ admits an $ad_{\g}(\h)$-invariant inner product $\varphi,$ that is,
$$\varphi([X,Y],Z)+\varphi(Y,[X,Z])=0$$
for all $X \in \h$ and $Y,Z \in \g$ such that $\varphi(\h,\m)=0.$ An interesting particular case is the following. Assume that $\g$ is $\Z_2$-graded and that this grading is effective that is $\h$ doesn't contain non trivial ideal of $\g$. If $ad_{\g}(\h)$ is irreducible on $\m$, then $\g$ is simple, or a sum $\g_1 + \g_1$ with $\g_1$ simple or $\m$ abelian. In the first case, the Killing-Cartan form $K$ of $\g$ induces a negative or positive defined bilinear form on $\m$. It follows a classification of $\Z_2$-graded Lie algebras when $\g$ is simple or semi-simple.

Many results on the problem of classifications concern more particularly  the simple Lie algebras. For solvable or nilpotent Lie algebras, it is an open problem. A first approach is to study induced grading on Borel or parabolic subalgebras of simple Lie algebras. In this work we describe $\Gamma$-grading of the Heisenberg algebras. Two reasons for this study

$\bullet$ Heisenberg algebras are nilradical of some Borel subalgebras.

$\bullet$ The Riemannian and Lorentzian geometries on the $3$-dimensional Heisenberg group have been studied recently by many authors. Thus it is interesting to study the Riemannian and Lorentzian symmetries with the natural symmetries associated with a $\Gamma$-symmetric structure on the Heisenberg group. In this paper we prove that these geometries are entirely determinated by Riemannian and Lorentzian structures adapted to $\Z_2^2$-symmetric structures.
\subsection{$\Gamma$-symmetric spaces}
Let $\Gamma$ be a finite abelian group.
\begin{definition}
A $\Gamma$-symmetric space is a triple $(G,H,\tilde{\Gamma})$ where $G$ is a connected Lie group, $H$ a closed subgroup of $G$ and $\tilde{\Gamma}$ a finite abelian subgroup of the group ${\rm Aut}(G)$ of automorphisms of $G$ isomorphic to $\Gamma$ such that
$G_e^\Gamma \subset H \subset G^\Gamma$ where $G^\Gamma=\{x \in G, \ \si (x)=x\ \ \forall \si \in \tilde{\Gamma}\}$, $G_e^\Gamma$ the identity component of $G^\Gamma$.
\end{definition}
\noindent If $\Gamma$ is isomorphic to $\Z_2$ then we find the notion of symmetric spaces again.  If $\Gamma$ is isomorphic to $\Z_k$ with $k \geq 3$, then $\Gamma$ is a cyclic group generated by an automorphism of order $k$. The corresponding spaces are called generalized symmetric spaces and have been studied by A.J. Ledger, M. Obata \cite{L.O}, A. Gray, J. A. Wolf,  \cite{[G.W]} and O. Kowalski \cite{[K]}. The general notion of $\Gamma$-symmetric spaces was introduced by R. Lutz \cite{[L]} and was algebraically reconsidered by Y. Bahturin and M. Goze \cite{[B.G]}.

\noindent An equivalent and useful definition is the following:

\begin{definition}
Let $\Gamma $ be a finite abelian group. A $\Gamma$-{\it symmetric space} is an homogeneous space $ G/H $ such that there exists an injective homomorphism
\[
\rho : \Gamma \to Aut(G)
\]
 where $ Aut (G) $ is the group of automorphisms of the Lie group $ G $, the subgroup $ H $ satisfies $G^{\Gamma}_{e} \subset H \subset G^{\Gamma}$  where
$ G^{\Gamma} = \left\{ x\in G/ \rho(\gamma)(x) = x, \forall \gamma \in \Gamma \right\}$ and $G^{\Gamma}_{e}$ is the connected identity component of $ G^{\Gamma} $  of $G$.
\end{definition}

In \cite{[B.G]}, one proves that, if $G$ and $H$ are connected, then the triple $(G,H,\tilde{\Gamma})$ is a $\Gamma$-symmetric space if and only if the complexified Lie algebra $\g_\C=\g \otimes_\R \C$ of $\g$ is $\Gamma$-graded:
$$\g_\C= \bigoplus _{\gamma \in \Gamma}\g_\gamma$$
where $\g_\epsilon=\h$ is the Lie algebra of $H$ with $\epsilon$ is the unit of $\Gamma$. In this case, we have  the relations
\[
[\mathfrak{g}_{\gamma},\mathfrak{g}_{\gamma'}] \subset \mathfrak{g}_{\gamma \gamma'}
\qquad \qquad  \forall \; \gamma, \gamma' \in \Gamma.
\]
In fact, the derivative of an automorphism $\sigma$ of $G$ belonging to $\tilde{\Gamma}$ is an automorphism of $\g$, still denoted $\sigma.$ So if $\gamma$ runs over $\tilde{\Gamma}$, we obtain a subgroup $\hat{\Gamma}$ of the group of automorphisms of $\g$ which is isomorphic to $\Gamma$.
The elements of $\hat{\Gamma}$ are automorphisms of $\g$ of finite order, pairwise commuting and the $\Gamma$-grading corresponds to the spectral decomposition of $\g_\C$ associated with the abelian finite group $\hat{\Gamma}$. Conversely, if we have a $\Gamma$-grading of $\g_\C$, and if we denote by $\check{\Gamma}$ the dual group of $\Gamma$, that is, the group of characters, thus $\check{\Gamma}$ is a finite abelian group isomorphic to $\Gamma$. Any element $\chi \in \check{\Gamma}$ can be considered as an automorphism of $\g_\C$ by
$$\chi(X)=\chi(\gamma)X$$
for any homogeneous vector $X \in \g_\gamma$. Thus $\check{\Gamma}$ is an abelian subgroup of $Aut(\g_\C)$ isomorphic to $\Gamma$ and the $\Gamma$-grading of $\g$ corresponds to the spectral decomposition associated with 
$\check{\Gamma}$ considered as an abelian finite subgroup of  $Aut(\g_\C)$. Then, if we assume that $G$ is also simply connected, we have a one-to-one correspondence    between the set of  $\Gamma$-symmetric stuctures and the $\Gamma$-gradings of $\g$.

 In \cite{[L]}, it is shown that for any $\bar{x }\in M=G/H$, there exists a subgroup $\Gamma_{\bar{x}}$ of the group ${\mathcal{D}}{\rm iff}(M)$ of diffeomorphisms of $M$, isomorphic to $\Gamma$, such that $\bar{x}$ is the unique point of $M$ satisfying $\sigma(\bar{x})=\bar{x}$ for any $\sigma \in \Gamma_{\bar{x}}.$ By extension, the elements of $\Gamma_{\bar{x}}$ are also called symmetries of $M$.

\subsection{$\Z_2^k$-symmetric spaces}

Assume that $\Gamma=\Z_2^k$. In this case any element of $\hat{\Gamma}$ is an involutive automorphism of $\g$  and the eigenvalues are real. Since the elements of $\hat{\Gamma}$ are pairwise commuting, we define a spectral decomposition of $\g$ itself. This implies a $\Z_2^k$-grading  defined on $\g$  :
  \[
\mathfrak{g}=  \underset{\gamma \in \Gamma}{\bigoplus}\; \mathfrak{g}_{\gamma}.
\]
For example, if $k=2$, then $\Gamma=\{a,b,c,\epsilon\}$ where $\epsilon$ is the identity, with  
$$a^2=b^2=c^2=\epsilon, \quad ab=c, \quad bc=a, \quad ca= b.$$ and $\hat{\Gamma}$ contains $4$ elements, $\si _a,\si _b, \si_c$ and the identity $Id$. These maps are involutive and satisfy
$$\si _a \circ \si _b=\si _c, \quad \si _b \circ \si _c=\si _a,  \quad \si _c \circ \si _a=\si _b .$$ 
Each one of these linear maps is diagonalizable, and because they are pairwise commuting, we can diagonalize all these maps simultaneously. Let $\g_a=\{X \in \g, \si_a(X)=X,\si_b(X)=-X\}$,  $\g_b=\{X \in \g, \si_a(X)=-X,\si_b(X)=X\}$,  $\g_c=\{X \in \g, \si_a(X)=-X,\si_b(X)=-X\}$  and $\g_\epsilon=\{X \in \g, \si_a(X)=X,\si_b(X)=X\}$ be the root spaces. We have 
$$\g=\g_\epsilon \oplus \g_a \oplus \g_b \oplus \g_c.$$

Let us return to the general case $\Gamma=\Z_2^k$. If $G$ is connected and simply connected and $H$ connected, then the $\Gamma$-grading of $\g$ determine a structure of $\Gamma$-symmetric space on the triple $(G,H,\tilde{\Gamma})$. We will say also that the homogeneous space $G/H$ is a $\Z_2^k$-symmetric space. 
\medskip

\begin{proposition}
Any  $\Z_2^k$-symmetric space homogeneous space $G/H$ is reductive.
\end{proposition}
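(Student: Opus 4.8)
The plan is to produce an explicit reductive decomposition directly from the grading. Since here $\Gamma=\Z_2^k$, the discussion preceding the statement shows that the decomposition $\g=\bigoplus_{\gamma\in\Gamma}\g_\gamma$ already lives in $\g$ (not merely in $\g_\C$), that $\g_\epsilon=\h$, and that $[\g_\gamma,\g_{\gamma'}]\subset\g_{\gamma\gamma'}$ for all $\gamma,\gamma'$. I would therefore set
\[
\m=\bigoplus_{\gamma\in\Gamma,\ \gamma\neq\epsilon}\g_\gamma ,
\]
so that $\g=\h\oplus\m$ as vector spaces, and claim that $\m$ is an $\mathrm{Ad}(H)$-invariant complement of $\h$, which is exactly the statement that $G/H$ is reductive.

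First I would verify the infinitesimal invariance $[\h,\m]\subset\m$: for $X\in\h=\g_\epsilon$ and $Y\in\g_\gamma$ with $\gamma\neq\epsilon$ one has $[X,Y]\in\g_{\epsilon\gamma}=\g_\gamma\subset\m$ by the bracket relation of the grading. When $H$ is connected (the standing hypothesis under which the $\Z_2^k$-symmetric structure on $G/H$ was defined) this already gives $\mathrm{Ad}(H)\m\subset\m$, and the proof is complete. To treat the general case I would instead use that $H\subset G^\Gamma$, i.e. every $\sigma\in\tilde\Gamma$ fixes each $h\in H$. For an automorphism $\sigma$ of $G$ one has the identity $d\sigma\circ\mathrm{Ad}(h)=\mathrm{Ad}(\sigma(h))\circ d\sigma$ on $\g$; hence $\sigma(h)=h$ forces $d\sigma$ to commute with $\mathrm{Ad}(h)$. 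Since the spaces $\g_\gamma$ are precisely the simultaneous eigenspaces of the finitely many commuting involutions $d\sigma$, $\sigma\in\hat\Gamma$, any operator commuting with all of them preserves every $\g_\gamma$; in particular $\mathrm{Ad}(h)\m\subset\m$ for all $h\in H$. Thus $\m$ is $\mathrm{Ad}(H)$-invariant and $G/H$ is reductive.

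I do not expect a genuine obstacle here: the nontrivial inputs (that each $d\sigma$ is diagonalizable with eigenvalues $\pm1$, that the $\g_\gamma$ are their common eigenspaces, and the bracket relations) are already recorded above, so the only thing requiring a line of care is the commutation identity $d\sigma\circ\mathrm{Ad}(h)=\mathrm{Ad}(\sigma(h))\circ d\sigma$ together with the elementary fact that an endomorphism commuting with a family of diagonalizable operators preserves their joint eigenspaces. If one is content to work under the blanket assumption that $H$ is connected, the argument collapses to the single observation $[\h,\m]\subset\m$.
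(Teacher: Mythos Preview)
Your proof is correct and follows exactly the same approach as the paper: define $\m=\bigoplus_{\gamma\neq\epsilon}\g_\gamma$ and read off $[\g_\epsilon,\m]\subset\m$ from the grading relations. The paper stops at this infinitesimal condition, while you additionally supply the $\mathrm{Ad}(H)$-invariance argument for possibly disconnected $H$; this is a harmless (and welcome) extra, not a different method.
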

\pf In fact if $\g = \underset{\gamma \in \Z_2^k}{\bigoplus}\; \mathfrak{g}_{\gamma}$ is the associated decomposition of $\g$, thus putting 
$$\m=~\underset{\gamma \in \Gamma, \gamma \neq \epsilon}{\bigoplus}\; \mathfrak{g}_{\gamma},$$
we have $\g= \g_\epsilon \oplus \m$ with $[\g_\epsilon,\g_\epsilon] \subset \g_\epsilon$ and $[\g_\epsilon,\m ] \subset \m$. The decomposition $\g=\g_\epsilon \oplus \m$ is reductive. In general $[\m,\m] $  is not a subset  of $\g_\epsilon$, except if $k=1$.

\medskip

Two $ \Z_2^k$-gradings  $ \g=\bigoplus_{\gamma \in \Z_2^k}\g_\gamma \  {\rm and} \ \g=\bigoplus_{\gamma' \in \Z_2^k}\g'{_{\gamma'}} \ {\rm of} \ \g $ are called equivalent  if  there   exist an automorphism $\pi$ of $\g$ and an automorphism $\omega$ of $\Z_2^k$ such that
$$\g'{_{\gamma'}}=\pi(\g_{\omega(\gamma)}) \qquad {\rm for \  any} \qquad \gamma' \in \Z_2^k.$$
If we consider only connected and simply connected groups $G$, and connected subgroups $H$, then the classification of $\Z_2^k$-symmetric spaces is equivalent to the classification, up to equivalence, to $\Z_2^k$-gradings on Lie algebras. For example,
the $\Z_2 ^2$-grading of classical simple complex Lie algebras are classified in \cite{[B.G]}. This classification is completed for exceptional simple algebras in \cite{[Ko]}.

\subsection{Riemannian and pseudo-Riemannian $\Z_2^k$-symmetric spaces}
Let $(G,H,\Z^k_2)$ be a $\Z_2^k$-symmetric space with $G$ and $H$ connected. The homogeneous space $M=G/H$ is reductive. Then there exists a one-to-one correspondence between the $G$-invariant pseudo-Riemannian metrics $g$ on $M$ and the non degenerated symmetric bilinear form $B$ on $\m$ satisfying
$$B([Z,X],Y)+B(X,[Z,Y])=0$$
for all $X,Y \in \m$ and $Z \in \g_\epsilon$. 

\begin{definition}\cite{[G.R]} \
A $\Z_2^k$-symmetric space $M=G/H$ with $Ad_G(H)$  compact, is called Riemannian  $\Z_2^k$-symmetric if $M$ is provided with a $G$-invariant Riemannian metric $g$ whose associated bilinear form $B$ satisfies
\begin{enumerate}
\item $B(\mathfrak g_{\gamma},\mathfrak g_{\gamma'}) = 0$ if  $\gamma \neq \gamma' \neq \epsilon,$
\item The restriction of $ B $ to $\m=\oplus_ {\gamma\neq \epsilon}\mathfrak g_\gamma $ is  positive definite.
\end{enumerate}
\end{definition}
In this case the linear automorphisms which belong to $\hat{\Gamma}$ are linear isometries. Some examples are described in \cite{[P.R]}.

\begin{proposition}
Let $(G,H,\Z_2^k)$ be a Riemannian $\Z_2^k$-symmetric space, $G$ and $H$ supposed to be connected. Then $H$ is compact.
\end{proposition}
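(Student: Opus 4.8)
The plan is to exhibit $H$ as a closed subgroup of a compact linear group, the essential input being the hypothesis contained in the definition of a Riemannian $\Z_2^k$-symmetric space, namely that $Ad_G(H)$ is compact. The representation to use is the adjoint representation of $H$ on $\g$ (or, geometrically, the linear isotropy representation of $H$ on $T_{\overline{e}}M\cong\m$); I will carry out the first and indicate the second at the end.

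First I would look at $Ad_G|_H\colon H\to GL(\g)$. Its image is $Ad_G(H)$, which is compact by hypothesis and hence a compact, in particular closed, subgroup of $GL(\g)$. Since $G$ is connected, $\ker(Ad_G)=Z(G)$, so $\ker(Ad_G|_H)=H\cap Z(G)$. This intersection is contained in $Z(G)$, hence is fixed by every inner automorphism of $G$, so it is a normal subgroup of $G$ lying inside $H$; assuming, as is customary for symmetric pairs, that $G$ acts (almost) effectively on $M=G/H$ --- equivalently, that $\h=\g_\epsilon$ contains no nonzero ideal of $\g$ --- this forces $H\cap Z(G)=\{e\}$. Therefore $Ad_G|_H$ is a continuous bijective homomorphism of $H$ onto the compact group $Ad_G(H)$, and since a continuous surjective homomorphism of Lie groups is open, it is a homeomorphism; consequently $H$ is compact.

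The only non-formal point is the triviality of $H\cap Z(G)$, and it is genuinely needed: a noncompact piece of $Z(G)$ sitting inside $H$ would be invisible to $Ad_G(H)$ while ruining compactness of $H$, so the statement rests on the (almost-)effectiveness of the $\Z_2^k$-symmetric structure --- that is the step I would present most carefully. For completeness the geometric variant proceeds the same way: since $g$ is Riemannian, $H$ acts on $T_{\overline{e}}M\cong\m$ by linear isometries of the positive-definite form $B$, so the isotropy homomorphism maps $H$ into the compact group $O(\m,B)$; its image is $Ad_G(H)|_\m$, the continuous image of the compact set $Ad_G(H)$, hence closed in $O(\m,B)$, while its kernel is again the ineffective part of the action, trivial under effectiveness. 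In either description $H$ is mapped homeomorphically onto a compact group, hence $H$ is compact.
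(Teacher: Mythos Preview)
Your argument is essentially sound and more carefully stated than the paper's own proof, which consists of the single sentence ``$H$ coincides with the identity component of the isotropy group which is compact.'' That is precisely your geometric variant: the Riemannian metric makes the isotropy representation land in the compact group $O(\m,B)$, and (under effectiveness) $H$ is identified with a closed subgroup of it. Your primary route via $Ad_G|_H$ is a slightly different, more algebraic path that exploits directly the standing hypothesis $Ad_G(H)$ compact rather than the Riemannian structure; it has the virtue of making explicit where the effectiveness assumption enters.

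One point to tighten: you slide between ``almost effective'' and ``effective''. The Lie-algebra condition that $\h$ contain no nonzero ideal of $\g$ is equivalent to \emph{almost} effectiveness, which only forces $H\cap Z(G)$ to be discrete, not trivial. That is not enough: with $G=\widetilde{SL(2,\R)}$ and $H\cong\R$ the lift of $SO(2)$, one has $Ad_G(H)\cong S^1$ compact, the grading $\mathfrak{sl}(2,\R)=\mathfrak{so}(2)\oplus\m$ is $\Z_2$-symmetric with $\h$ containing no nonzero ideal, and $G/H$ carries a $G$-invariant Riemannian metric, yet $H$ is not compact. So you should state plainly that full effectiveness (trivial kernel of the $G$-action on $G/H$) is the hypothesis that makes $H\cap Z(G)=\{e\}$, and hence the proposition, go through. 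The paper's one-line proof relies on the same unspoken assumption; you are right to present this step ``most carefully''.
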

\pf In fact, $H$ coincides with the identity component of the isotropy group which is compact.

\medskip

\noindent{\bf Example: $\Z_2^k$-symmetric nilpotent spaces.} Let $(G,H,\Z_2^k)$ be a  $\Z_2^k$-symmetric space with $G$ nilpotent. Such a space will be called a $\Z_2^k$-symmetric nilpotent spaces. If $k=1$, we cannot have on $G/H$ a symmetric Riemannian metric except if $G$ is abelian. But, if $k \geq 2$, there exist $\Z_2^k$-symmetric Riemannian nilpotent spaces. For example, let $G$ be the $3$-dimensional Heisenberg Lie group. Its Lie algebra $\h_3$ admits a basis $\{X_1,X_2,X_3\}$ with $[X_1,X_2]=X_3$. We have a $\Z_2^2$-grading of $\h_3$:
$$\h_3=\{0\} \oplus \R\{X_1\}\oplus \R\{X_2\}\oplus \R\{X_3\}$$
and the metric 
$$g=\omega_1^2+\omega_2^2+\omega_3^2$$
defines a structure of $\Z_2^k$-symmetric Riemannian nilpotent spaces on $H_3/ \{e\}=H_3$ where 
$\{\omega_1,\omega_2,\omega_3\}$ is the dual basis of $\{X_1,X_2,X_3\}$. We will develop this calculus in the next sections.

\medskip

A Lorentzian metric on a $n$-dimensional differential manifold $M$ is a smooth field  of non degenerate quadratic forms of signature $(n-1,1)$.
We say that a homogeneous space $(M = G/H, g)$ provided with a Lorentzian metric $g$ is {\it Lorentzian} if the canonical action of $G$ on $M$ preserves the metric. If $M$ is reductive and if $\g=\g_\epsilon \oplus \m$, the Lorentzian metric is determinate by the $ad \g_\epsilon$-invariant non degenerate bilinear form $B$ with signature $(n-1,1)$.

\begin{definition}
Let $(G,H,\Z^k_2)$ be a $\Z_2^k$-symmetric space. It is called Lorentzian if there exists on the homogeneous space $ M = G / H$  a Lorentzian metric $g$  such that one of the two conditions is satisfied:
\begin{enumerate}
\item The homogeneous non trivial components $\g_\gamma$ of the $\Z_2^k$-graded Lie algebra $\g$ are orthogonal and non-degenerate with respect to the induced bilinear form $ B $.
\item One non trivial component $\g_{\lambda_0}$ is degenerate, the other components are orthogonal and non-degenerate,  and the exists a component $\g_{\lambda_1}$ such that the signature of the restriction to $B$ at $\g_{\lambda_0} \oplus \g_{\lambda_1}$ is  $(1,1)$.
\end{enumerate}
\end{definition}
Let us note that, in this case, $H$ is not necessarily compact. Some  examples of $\Z_2^k$-symmetric nilpotent Lorentzian spaces are described in the next sections. 

\section{Affine structures on $\Z_2^k$-symmetric spaces}

Let $(G,H,\Z_2^k)$ be a $\Z_2^k$-symmetric space. Since the homogeneous space $G/H$ is reductive, 
from \cite{Kobayashi-Nomizu-2}, Chapter X, we deduce that $M=G/H$ admits  two $G$-invariant canonical connections 
denoted by $\nabla$ and $\overline \nabla$. The \emph{first canonical connection}, $\nabla$, satisfies
$$
\left\{
\begin{array}{l}
R(X,Y)=-\ad([X,Y]_\h), \qquad T(X,Y)_{\overline e}=-[X,Y]_{\mathfrak{m}}, \quad \forall X,Y \in \mathfrak{m} \\
\nabla T=0 \\
\nabla R=0
\end{array}
\right.
$$
where $T$ and $R$ are the torsion and the curvature tensors of $\nabla$. The tensor $T$ is trivial
if and only if $[X,Y]_{\mathfrak{m}}=0$ for all $X,Y \in \mathfrak{m}$. This means that $[X,Y] \in \h$ that is $[\mathfrak{m},\mathfrak{m}]\subset \h$.
 If the grading of $\g$ is given by $\Z_2^k$ with $k >1$, then $[\mathfrak{m},\mathfrak{m}]$ is not  a subset of  $\h$ and then the torsion $T$ need not to vanish.
In this case the another connection $\overline \nabla$ is given by $\overline \nabla _X Y= \nabla_XY-T(X,Y)$. This is an affine invariant torsion free connection
on $G/H$ which has the same geodesics as $\nabla$. This connection is called the \emph{second canonical connection} or the \emph{torsion-free canonical
connection}. 
\medskip

\noindent{\bf Remark.} Actually, there is another way of writing the canonical 
affine connection of a $\Gamma$-symmetric space, without any reference to Lie algebras. 
This is done by an intrinsic construction of $\Gamma$-symmetric spaces  proposed by Lutz 
in \cite{[L]}.

\subsection{Associated affine connection }
 Any symmetric space $G/H$ is an affine space, that is, it is provided with an affine connection $\nabla$ whose torsion tensor $T$ and curvature tensor $R$ satisfy
 $$T=0, \qquad \nabla R=0$$
 where
 $$
 \begin{array}{ll}
 \nabla R(X_1,X_2,X_3,Y)=&\nabla(Y,R(X_1,X_2,X_3))-R(\nabla(Y,X_1),X_2,X_3)\\
& -R(X_1,\nabla(Y,X_2),X_3)-R(X_1,X_2,\nabla(Y,X_3))
 \end{array}
 $$
 for any vector fields $X_1,X_2,X_3,Y$ on $G/H$. It is the only affine connection which is invariant by the symmetries of $G/H$. This means that the two canonical connections, which are defined on an homogeneous reductive space, coincides if the reductive space is symmetric. For example, if $G/H$ is a Riemannian symmetric space, this connection $\nabla$ coincides with the Levi-Civita connection associated with the Riemannian metric.

 \medskip
 
 Let us return to the general case. Let us assume that  $G/H$ is a reductive homogeneous space, and let $\g=\h \oplus \m$ be the reductive decomposition of $\g$.  Any connection on $G/H$ is given by a linear map
 $$\begin{array}{l}
\bigwedge : \m \rightarrow gl(\m)
\end{array}
$$
 satisfying
 $$
\begin{array}{l}
\bigwedge[X,Y]=[\bigwedge (X), \lambda (Y)]
\end{array}
$$
 for all $X \in \m$ and $Y \in \h$, where $\lambda$ is the linear isotropy representation of $\h$. The corresponding torsion and curvature tensors are given by:
 $$\begin{array}{l}
T(X,Y)=\bigwedge(X)(Y) - \bigwedge(Y)(X)-[X,Y]_{\m}
\end{array}$$
 and
 $$\begin{array}{l}
R(X,Y)=[\bigwedge(X),\bigwedge(Y)]-\bigwedge[X,Y]-\lambda([X,Y]_{\h})
\end{array}
$$
 for any $X,Y \in \m.$

 \medskip
 
 Let $(G,H,\Z_2^k)$ be a $\Z_2^k$-symmetric space. We have recalled that, when $k=1$, the homogeneous space $G/H$ is an affine space. But, as soon as $k>1$, in general the two canonical connections do not coincide and the torsion tensor of the first one is not trivial.  We   can consider connections adapted to the $\Z_2^k$-symmetric structures.
 
\begin{definition} \label{adapted}
Let $\nabla$ an affine connection on the $\Z_2^k$-symmetric space $G/H$ defined by the linear map
$$\bigwedge : \m \rightarrow gl(\m).$$
Then this connection is called adapted to the $\Z_2^k$-symmetric structure, if 
$$\bigwedge(X_\gamma)(\g_{\gamma'}) \subset \g_{\gamma\gamma'}$$
for any $\gamma,\gamma' \in \Z_2^k$, $\gamma,\gamma' \neq \epsilon.$ The connection is called homogeneous if any
homogeneous component $\g_{\gamma}$ of $\m$ is invariant by $\bigwedge$.
\end{definition}
\noindent{\bf Examples}
\begin{enumerate}
\item If $k=1$, the affine canonical connection is adapted and homogeneous.
\item Let us consider the $5$-dimensional nilpotent Lie algebra, $\mathfrak{l}_5$ whose Lie brackets are given in a basis $\{X_1,\cdots,X_5\}$ by 
$$[X_1,X_i]=X_{i+1},\qquad i=2,3,4.$$
This algebra admits a $\Z_2$-grading
 $$\mathfrak{l}_5=\R\{X_3,X_5\} \oplus \R\{X_1,X_2,X_4\}.$$
 Thus $\bigwedge(X_1),\bigwedge(X_2),\bigwedge(X_4)$ are matrices of order $3$. If we assume that the torsion $T$ is zero, we obtain
 $$\begin{array}{l}
 \bigwedge(X_1)=\left(
 \begin{array}{lll}
 a & 0 & 0\\
 b & 0 & 0 \\
 c & d & \frac{a}{2}
 \end{array}
 \right)
 , \ \
 \bigwedge(X_2)=\left(
 \begin{array}{lll}
 0 & 0 & 0\\
0 & e & 0 \\
 d & f & \frac{a}{2}
 \end{array}
 \right)
, \ \
 \bigwedge(X_3)=\left(
 \begin{array}{lll}
 0 & 0 & 0\\
0 & 0 & 0 \\
 -\frac{a}{2} & 0 & 0
 \end{array}
 \right).
\end{array} 
$$
 The linear isotropy representation of $H$ whose Lie algebra is $\h$ is given by taking the differential of the map $\mathfrak{l}_5/H\rightarrow \mathfrak{l}_5/H$ corresponding to the left multiplication $ \overline{x}\rightarrow h\overline{x}$ with $\overline{x}=xH_4$. We obtain
 $$
 \lambda(X_3)=\left(
 \begin{array}{lll}
0 & 0 & 0\\
0 & 0 & 0 \\
 1 & 0 & 0
 \end{array}
 \right)
 , \qquad
\lambda(X_5)=\left(0
 \right).
 $$
 We deduce that the curvature is always non zero.
\end{enumerate}


\section{The $\Z_2^k$-symmetric spaces $(\HH_3,H,\Z_2^k)$}
 We denote by $\HH_3$ the $3$-dimensional Heisenberg group, that is the linear group of dimension $ 3 $ consisting of matrices
\[
\begin{pmatrix}
1&a&c\\
0&1&b\\
0&0&1
\end{pmatrix}
\qquad \qquad a,b,c \in \R. 
\]
Its Lie algebra, $\h_3$ is the real Lie algebra whose elements are matrices
\[
\begin{pmatrix}
0&x&z\\
0&0&y\\
0&0&0
\end{pmatrix}
\qquad  \mbox{with} \qquad x,y,z \in \R.
\]
The elements of  $\mathfrak h_{3}$, $X_1$, $X_2$, $X_3$, corresponding to $(x,y,z) =(1,0,0),(0,1,0)$ and $(0,0,1)$ form a basis of $\mathfrak h_{3}$ and the Lie brackets are given in this basis by
$$[X_1,X_2] = X_3, \quad \quad  [X_1,X_3] = [X_2,X_3] = 0 .$$

\subsection{Description of $Aut(\h_3)$}
Denote by $Aut(\h_3)$ the group of automorphisms  $\h_3$. Every  $\tau \in Aut(\h_3)$ admits in the basis $\{X_1,X_2,X_3\}$ the following matricial representation:
\begin{equation} \label{aut}
\begin{pmatrix}
\alpha_1&\alpha_2&0\\
\alpha_3&\alpha_4&0\\
\alpha_5&\alpha_6&\Delta
\end{pmatrix} \quad {\rm with} \quad \Delta = \alpha_1 \alpha_4 - \alpha_2 \alpha_3 \neq 0.
\end{equation}
We will denote by $\tau(\alpha_1,\alpha_2,\alpha_3,\alpha_4,\alpha_5,\alpha_6)$ any element of $Aut(\h_3)$ with this representation.
Let $\Gamma $ be a finite  abelian subgroup of $Aut(\h_3)$. It admits a cyclic decomposition. If $\Gamma $ contains a component of the cyclic decomposition which is isomorphic to $\Z_k $,  then there exists an automorphism $\tau $ satisfying $\tau^ k = Id $. The aim of this section is to determinate  the cyclic decomposition of any finite abelian subgroup $\Gamma$.

\medskip

\noindent $\bullet$ {\bf Subgroups of $Aut(\h_3)$ isomorphic to $\Z_2$}

Let $\tau \in Aut(\h_3)$ satisfying $\tau^2 = Id $. If we consider the matricial representation (\ref{aut}) of $\tau$, we obtain:
\[
\begin{pmatrix}
\alpha_1^{2}+ \alpha_{2}\alpha_{3}&\alpha_{1}\alpha_2 + \alpha_{2}\alpha_{4}&0\\
\alpha_1\alpha_3+\alpha_3 \alpha_4&\alpha_{2}\alpha_{3}+\alpha_4^{2}&0\\
\alpha_1\alpha_5+ \alpha_3 \alpha_6 + \Delta \alpha_5&
\alpha_2\alpha_5+\alpha_4\alpha_6+\Delta \alpha_6&\Delta^{2}
\end{pmatrix} =
\begin{pmatrix}
1&0&0\\
0&1&0\\
0&0&1
\end{pmatrix}.
\]
\begin{proposition}\label{lemma1}
Any involutive automorphism $\tau $ of $ Aut (\mathfrak h_3) $ is equal to one of the following automorphisms
$$\begin{array}{c}
Id, \qquad \tau_1(\alpha_3,\alpha_6) = \begin{pmatrix}
-1&0&0\\
\alpha_3&1&0\\
\displaystyle\frac{\alpha_{3}\alpha_6}{2}&\alpha_6&-1
\end{pmatrix},  \qquad  \tau_2(\alpha_3,\alpha_5) = \begin{pmatrix}
1&0&0\\
\alpha_3&-1&0\\
\alpha_5&0&-1
\end{pmatrix},
\end{array}$$
$$\begin{array}{c}
\tau_3(\alpha_1,\alpha_{2}\neq 0,\alpha_6) = \begin{pmatrix}
\alpha_1&\alpha_{2}&0\\
\displaystyle\frac{1-\alpha_1^{2}}{\alpha_{2}}&-\alpha_{1}&0\\
\displaystyle\frac{(1 + \alpha_{1})\alpha_6}{\alpha_{2}}&\alpha_6&-1
\end{pmatrix},  \qquad
 \tau_4(\alpha_5,\alpha_6) = \begin{pmatrix}
-1&0&0\\
0&-1&0\\
\alpha_5&\alpha_{6}&1
\end{pmatrix}  .
\end{array}$$
\end{proposition}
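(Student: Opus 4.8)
\emph{Proof proposal.} The plan is to treat the matrix identity $\tau^2=\mathrm{Id}$ displayed just before the statement as a system of seven scalar polynomial equations in $\alpha_1,\dots,\alpha_6$ and then solve it by a short case analysis driven by the upper $2\times 2$ block. First I would record the equations: the $(3,3)$-entry gives $\Delta^2=1$; the two diagonal entries of the upper block give $\alpha_1^2+\alpha_2\alpha_3=\alpha_4^2+\alpha_2\alpha_3=1$, hence $\alpha_1^2=\alpha_4^2$; the two off-diagonal entries give $\alpha_2(\alpha_1+\alpha_4)=\alpha_3(\alpha_1+\alpha_4)=0$; and the bottom row gives $(\alpha_1+\Delta)\alpha_5+\alpha_3\alpha_6=0$ and $\alpha_2\alpha_5+(\alpha_4+\Delta)\alpha_6=0$. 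The off-diagonal equations force the basic dichotomy: either $\alpha_2=\alpha_3=0$, or $\alpha_1+\alpha_4=0$.

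In the branch $\alpha_2=\alpha_3=0$ one gets $\alpha_1,\alpha_4\in\{\pm1\}$ and $\Delta=\alpha_1\alpha_4$, while the bottom-row equations collapse to $(\alpha_1+\Delta)\alpha_5=0$ and $(\alpha_4+\Delta)\alpha_6=0$. Running through the four sign choices of $(\alpha_1,\alpha_4)$ yields: $\alpha_1=\alpha_4=1$ gives $\tau=\mathrm{Id}$; $\alpha_1=\alpha_4=-1$ gives $\tau_4(\alpha_5,\alpha_6)$; $\alpha_1=-1,\alpha_4=1$ gives the $\alpha_3=0$ specialization of $\tau_1$; and $\alpha_1=1,\alpha_4=-1$ gives the $\alpha_3=0$ specialization of $\tau_2$. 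In the branch $\alpha_1+\alpha_4=0$ with $(\alpha_2,\alpha_3)\neq(0,0)$ one has $\Delta=\alpha_1\alpha_4-\alpha_2\alpha_3=-(\alpha_1^2+\alpha_2\alpha_3)=-1$, so the bottom-row equations become $(\alpha_1-1)\alpha_5+\alpha_3\alpha_6=0$ and $\alpha_2\alpha_5-(\alpha_1+1)\alpha_6=0$. Splitting on $\alpha_2\neq 0$ versus $\alpha_2=0$ (which forces $\alpha_3\neq 0$ and $\alpha_1^2=1$): for $\alpha_2\neq 0$ substitute $\alpha_3=(1-\alpha_1^2)/\alpha_2$ and solve the second linear equation as $\alpha_5=(1+\alpha_1)\alpha_6/\alpha_2$, checking that the first equation is then automatically satisfied, which produces exactly $\tau_3(\alpha_1,\alpha_2\neq0,\alpha_6)$; for $\alpha_2=0,\alpha_1=1$ one gets $\alpha_6=0$ and $\alpha_5$ free, i.e.\ the $\alpha_3\neq0$ part of $\tau_2(\alpha_3,\alpha_5)$; for $\alpha_2=0,\alpha_1=-1$ one gets $\alpha_5=\alpha_3\alpha_6/2$ and $\alpha_6$ free, i.e.\ the $\alpha_3\neq0$ part of $\tau_1(\alpha_3,\alpha_6)$. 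Merging the two branches gives precisely the five families in the statement.

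The argument is entirely elementary; the only delicate points are bookkeeping ones. I expect the main care to be needed in the subcase $\alpha_2\neq 0$, where one must verify that the first bottom-row equation is genuinely a consequence of the second after the substitutions (so that no extra constraint on $\alpha_1,\alpha_2,\alpha_6$ is lost), and in checking that the $\alpha_3=0$ cases coming from the diagonal branch glue continuously onto the one-parameter families $\tau_1,\tau_2$ rather than being spurious extra solutions. There is no conceptual obstacle beyond not dropping a sign or a case.
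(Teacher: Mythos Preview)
Your proposal is correct and follows exactly the route the paper intends: the paper simply displays the matrix identity $\tau^2=\mathrm{Id}$ and records the resulting families, leaving the elementary solution of the seven scalar equations to the reader, which is precisely the case analysis you carry out. Your bookkeeping (the dichotomy $\alpha_2=\alpha_3=0$ versus $\alpha_1+\alpha_4=0$, the verification that the first bottom-row equation is redundant when $\alpha_2\neq 0$, and the gluing of the $\alpha_3=0$ boundary cases into $\tau_1,\tau_2$) is accurate, so there is nothing to add.
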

\begin{corollary}
Any subgroup of $Aut(\h_3)$ isomorphic to $\Z_{2}$ is  one of the following:
$$
\begin{array}{ll}
\Gamma_1(\alpha_3,\alpha_6)=\{Id,\tau_1(\alpha_3,\alpha_6)\}, &
\Gamma_2(\alpha_3,\alpha_5)=\{Id,\tau_2(\alpha_3,\alpha_5)\}, \\
\Gamma_3(\alpha_1,\alpha_2,\alpha_6)=\{Id,\tau_3(\alpha_1,\alpha_2,\alpha_6), \ \alpha_2 \neq 0\}, &
\Gamma_4(\alpha_5,\alpha_6)=\{Id,\tau_4(\alpha_5,\alpha_6)\}.
\end{array}
$$
\end{corollary}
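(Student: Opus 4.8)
The plan is to deduce the corollary directly from Proposition~\ref{lemma1} by observing that every subgroup of $\mathrm{Aut}(\h_3)$ isomorphic to $\Z_2$ is of the form $\{Id,\tau\}$ where $\tau$ is an involutive automorphism different from the identity, and conversely, for any such $\tau$, the set $\{Id,\tau\}$ is a subgroup isomorphic to $\Z_2$. So the classification of $\Z_2$-subgroups is the same as the classification of non-trivial involutions of $\h_3$, which is exactly what Proposition~\ref{lemma1} provides.

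First I would note that if $\tau^2=Id$ and $\tau\neq Id$, then $\langle\tau\rangle=\{Id,\tau\}$ has order $2$, hence is isomorphic to $\Z_2$; and any subgroup $\Gamma\subset\mathrm{Aut}(\h_3)$ with $\Gamma\cong\Z_2$ has a unique non-identity element $\tau$, which necessarily satisfies $\tau^2=Id$, so $\Gamma=\{Id,\tau\}$. This sets up a bijection between $\Z_2$-subgroups and non-trivial involutions. Then I would simply invoke Proposition~\ref{lemma1}, which lists the non-trivial involutions as precisely the four families $\tau_1(\alpha_3,\alpha_6)$, $\tau_2(\alpha_3,\alpha_5)$, $\tau_3(\alpha_1,\alpha_2,\alpha_6)$ with $\alpha_2\neq 0$, and $\tau_4(\alpha_5,\alpha_6)$. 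Attaching $Id$ to each gives exactly the four families $\Gamma_1,\Gamma_2,\Gamma_3,\Gamma_4$ in the statement.

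The only genuinely non-routine point is making sure the four families in Proposition~\ref{lemma1} are stated so as to be mutually disjoint (or at least that the listing is exhaustive), so that one does not miss a subgroup or double-count; but since the corollary merely asserts that any $\Z_2$-subgroup is \emph{one of} these forms, exhaustiveness is all that is needed, and that is immediate from the proposition. I expect essentially no obstacle here: the corollary is a direct restatement of Proposition~\ref{lemma1} in terms of subgroups rather than involutions, and the proof is a one-line translation.

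\begin{proof}
A subgroup $\Gamma$ of $\mathrm{Aut}(\h_3)$ isomorphic to $\Z_2$ consists of $Id$ together with a single non-trivial element $\tau$, which must satisfy $\tau^2=Id$; conversely, for any involutive automorphism $\tau\neq Id$, the set $\{Id,\tau\}$ is a subgroup of $\mathrm{Aut}(\h_3)$ isomorphic to $\Z_2$. Thus the $\Z_2$-subgroups of $\mathrm{Aut}(\h_3)$ are in bijection with the non-trivial involutive automorphisms of $\h_3$. By Proposition~\ref{lemma1}, these involutions are exactly the elements of the four families $\tau_1(\alpha_3,\alpha_6)$, $\tau_2(\alpha_3,\alpha_5)$, $\tau_3(\alpha_1,\alpha_2,\alpha_6)$ with $\alpha_2\neq 0$, and $\tau_4(\alpha_5,\alpha_6)$. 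Adjoining $Id$ to each of these yields precisely the subgroups $\Gamma_1(\alpha_3,\alpha_6)$, $\Gamma_2(\alpha_3,\alpha_5)$, $\Gamma_3(\alpha_1,\alpha_2,\alpha_6)$ and $\Gamma_4(\alpha_5,\alpha_6)$, which proves the claim.
\end{proof}
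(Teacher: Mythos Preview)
Your proof is correct and matches the paper's approach: the paper states this as an immediate corollary of Proposition~\ref{lemma1} with no separate proof, and your argument is precisely the one-line translation from involutions to $\Z_2$-subgroups that justifies calling it a corollary.
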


\medskip

\noindent $\bullet$ {\bf Subgroups of $Aut(\h_3)$ isomorphic to $\Z_k, \ k \geq 3$}.
If $\tau=\tau (\alpha_1,\alpha_2,\alpha_3,\alpha_4, \alpha_5,\alpha_6)\in Aut(\h_3)$ satisfies $\tau^k=Id$, then $\Delta=\alpha_1\alpha_4-\alpha_2\alpha_3=1$ and its minimal polynomial has $3
$ simple roots and it is of degree $3$. More precisely, it is written
\[
m_\tau(x)=(x-1)(x-\mu_k)(x-\overline{\mu_k})
\]
where $\mu_k$ is a root of order $k$ of $1$. Since we can assume that $\tau$  is a generator of a cyclic subgroup of $Aut(\h_3)$ isomorphic to $\Z_k$, the root $\mu_k$ is a primitive root of $1$. There exists  $m$ relatively prime with $k$ such that $\mu_k= exp\left( \displaystyle\frac{2mi\pi}{k}\right) .$ We have $\alpha_1+\alpha_4=\mu_k+\overline{\mu_k}$
and $\alpha_1+\alpha_4=2\displaystyle\cos\frac{2m\pi}{k}$. Thus
$$
\alpha_1=\displaystyle\cos\frac{2m\pi}{k} - \sqrt{\cos^2\frac{2m\pi}{k} - 1- \alpha_{2} \alpha_{3}},\qquad
\alpha_4=\displaystyle\cos\frac{2m\pi}{k} + \sqrt{\cos^2\frac{2m\pi}{k} - 1- \alpha_{2} \alpha_{3}}$$
or
$$
\alpha_1=\displaystyle\cos\frac{2m\pi}{k} + \sqrt{\cos^2\frac{2m\pi}{k} - 1- \alpha_{2} \alpha_{3}},\qquad \alpha_4=\displaystyle\cos\frac{2m\pi}{k} - \sqrt{\cos^2\frac{2m\pi}{k} - 1- \alpha_{2} \alpha_{3}}.$$
If $\tau'$ and $\tau ''$ denote the automorphisms corresponding to these solutions, we have, for a good choice of the parameters $\alpha_{i}$,  $\tau' \circ \tau '' =Id$ and $\tau''=(\tau')^{k-1}.$
Thus these automorphisms generate the same subgroup of $Aut(\h_3)$. Moreover, with same considerations, we can choose
$m=1$. Thus we have determinate the automorphism
$\tau_5(\alpha_2,\alpha_3,\alpha_5,\alpha_6)$ whose matrix is
\[
 \begin{pmatrix}
\displaystyle\cos\frac{2\pi}{k} + \sqrt{\cos^2\frac{2\pi}{k} - 1- \alpha_{2} \alpha_{3}}&\alpha_2&0\\
\alpha_3&\displaystyle\cos\frac{2\pi}{k} - \sqrt{\cos^2\frac{2\pi}{k} - 1- \alpha_{2} \alpha_{3}}&0\\
\alpha_5&\alpha_6&1
\end{pmatrix}
\]
\begin{proposition}
Any abelian subgroup of $Aut(\mathfrak h_3) $ isomorphic to $\Z_k$,$k \geq 3$, is equal to
\[
\Gamma_{6,k}(\alpha_2,\alpha_3,\alpha_5,\alpha_6) =\left\{Id,\tau_{6}(\alpha_2, \alpha_3, \alpha_5, \alpha_6),\cdots,\tau_{6}^{k-1}, \ \ \alpha_{2} \alpha_{3} \leq -1+ \displaystyle\cos^2\frac{2\pi}{k}\right\}.
\]
\end{proposition}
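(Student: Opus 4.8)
The plan is to turn the analysis carried out just before the statement into a complete classification, and then add a short converse. Since $\Gamma\cong\Z_k$ it is cyclic, say $\Gamma=\langle\tau\rangle$ with $\tau$ of order exactly $k$; by (\ref{aut}) such a $\tau$ has block form with characteristic polynomial $(\Delta-x)\bigl(x^{2}-(\alpha_1+\alpha_4)x+\Delta\bigr)$. Because $\tau^{k}=Id$, the minimal polynomial of $\tau$ divides $x^{k}-1$, so $\tau$ is diagonalizable and its eigenvalues are $k$-th roots of unity; the eigenvalue $\Delta$ is real, hence $\Delta=\pm1$. I would then eliminate the case $\Delta=-1$ and the case in which the two eigenvalues of the upper $2\times2$ block are real: in each of these the three eigenvalues of $\tau$ all lie in $\{1,-1\}$ (a non-real conjugate pair in the block would have positive product, incompatible with $\Delta=-1$), so $\tau^{2}=Id$ and $k\leq2$, contradicting $k\geq3$. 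This forces $\Delta=1$ and the two block eigenvalues to be a non-real conjugate pair $\mu_k,\overline{\mu_k}$ of modulus $1$, i.e.\ a primitive $k$-th root of unity and its conjugate, exactly as recalled above.

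Next, replacing $\tau$ by a suitable power $\tau^{j}$ with $\gcd(j,k)=1$ — still a generator of $\Gamma$, still of the form (\ref{aut}), and still with $\Delta^{j}=1$ — I would normalize $\mu_k=\exp(2\pi i/k)$, that is $m=1$. Then $\alpha_1+\alpha_4=2\cos\frac{2\pi}{k}$ and $\alpha_1\alpha_4=\Delta+\alpha_2\alpha_3=1+\alpha_2\alpha_3$, so $\alpha_1$ and $\alpha_4$ are the roots of $x^{2}-2\cos\frac{2\pi}{k}\,x+(1+\alpha_2\alpha_3)$; these roots are real exactly when the discriminant is nonnegative, i.e.\ when $\alpha_2\alpha_3\leq-1+\cos^{2}\frac{2\pi}{k}$, which produces the two sign choices $\tau'$ and $\tau''=(\tau')^{k-1}$ (generating the same cyclic group) and in particular the matrix of $\tau_6(\alpha_2,\alpha_3,\alpha_5,\alpha_6)$ written just before the statement. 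No further constraint on $\alpha_5,\alpha_6$ appears: with $\Delta=1$ and $\mu_k\neq1$ that matrix has three distinct eigenvalues $1,\mu_k,\overline{\mu_k}$, hence is diagonalizable and automatically satisfies $\tau_6^{k}=Id$. Therefore $\Gamma=\langle\tau_6(\alpha_2,\alpha_3,\alpha_5,\alpha_6)\rangle=\Gamma_{6,k}(\alpha_2,\alpha_3,\alpha_5,\alpha_6)$.

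Finally, for the converse I would check that every $\tau_6(\alpha_2,\alpha_3,\alpha_5,\alpha_6)$ with $\alpha_2\alpha_3\leq-1+\cos^{2}\frac{2\pi}{k}$ does define such a subgroup: the square roots in its entries are real, one computes $\Delta=\alpha_1\alpha_4-\alpha_2\alpha_3=1\neq0$ so $\tau_6\in Aut(\h_3)$, its characteristic polynomial factors as $(1-x)(x-e^{2\pi i/k})(x-e^{-2\pi i/k})$ with three distinct roots (distinct since $k\geq3$), whence $\tau_6$ is diagonalizable, $\tau_6^{k}=Id$, and of order exactly $k$, so $\langle\tau_6\rangle\cong\Z_k$. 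The step I expect to require the most care is the normalization $m=1$: one must verify that the power $\tau^{j}$ realizing $\mu_k^{\,mj}=\mu_k$ remains of the form (\ref{aut}) with the same vanishing pattern and with $\Delta=1$, and still generates $\Gamma$; solving the quadratic for $\alpha_1,\alpha_4$ and the diagonalizability remarks are routine.
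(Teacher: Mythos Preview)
Your argument is correct and follows essentially the same route as the paper's discussion preceding the proposition: eigenvalue analysis of the block-triangular form (\ref{aut}), reduction to $\Delta=1$ with a non-real conjugate pair $\mu_k,\overline{\mu_k}$, normalization to $m=1$, and identification of $\alpha_1,\alpha_4$ as roots of the quadratic $x^{2}-2\cos\frac{2\pi}{k}\,x+(1+\alpha_2\alpha_3)$. You supply more detail than the paper does---in particular an explicit exclusion of $\Delta=-1$ and of real block eigenvalues, and a converse check that $\tau_6$ really has order $k$---but the underlying strategy is the same.
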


\medskip

\noindent{\bf General case.}
Suppose now that the cyclic decomposition of a finite abelian subgroup $\Gamma$ of $Aut(\h_3)$ is isomorphic to
$\Z_2^{k_2} \times \Z_3^{k_3} \times \cdots \times \Z_p^{k_p}$ with $k_i \geq 0.$
\begin{lemma}
Let $\Gamma$ be an abelian finite subgroup of $Aut(\h_3)$ with a cyclic decomposition isomorphic to
\[
\Z_2^{k_2} \times \Z_3^{k_3} \times \cdots \times \Z_p^{k_p}.
\]
Then
 \begin{enumerate}
 \item If there is $i\geq 3$ such that $k_i \neq 0$, then $k_2 \leq 1.$
 \item If $k_2 \geq 2$, then $\Gamma$ is isomorphic to $\Z_2^{k_2}$.
\end{enumerate}
\end{lemma}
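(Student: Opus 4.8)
The plan is to reduce everything to the classification of finite abelian subgroups of $GL_2(\R)$, using the block shape (\ref{aut}) of an automorphism of $\h_3$. The first step is to observe that the map $\pi$ assigning to $\tau(\alpha_1,\alpha_2,\alpha_3,\alpha_4,\alpha_5,\alpha_6)$ the upper left $2\times 2$ corner of the matrix (\ref{aut}) is a group homomorphism $\mathrm{Aut}(\h_3)\to GL_2(\R)$: the product of two matrices of the form (\ref{aut}) again has that form, with upper left corner the product of the two upper left corners, the constraint $\Delta=\alpha_1\alpha_4-\alpha_2\alpha_3$ being automatically respected since the determinant is multiplicative. Its kernel $N$ consists of the automorphisms with $\alpha_1=\alpha_4=1$ and $\alpha_2=\alpha_3=0$, and one checks at once that $N\cong(\R^2,+)$; in particular $N$ is torsion free. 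Hence for every finite subgroup $\Gamma\subset\mathrm{Aut}(\h_3)$ one has $\Gamma\cap N=\{Id\}$, so $\pi$ restricts to an isomorphism of $\Gamma$ onto a finite abelian subgroup of $GL_2(\R)$.

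The second step recalls which groups these are. Averaging a Euclidean inner product over $\pi(\Gamma)$, we may conjugate $\pi(\Gamma)$ inside $O(2)$. A finite abelian subgroup of $O(2)$ is either contained in $SO(2)$, hence cyclic, or it contains a reflection; in the latter case commutativity forces its rotation part into $\{\pm Id\}$, so the group is isomorphic to $\Z_2$ or to $\Z_2^2$. Therefore $\Gamma$ is cyclic or $\Gamma\cong\Z_2^2$.

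For (1), assume $k_i\neq 0$ for some $i\geq 3$. Then the direct factor $\Z_i$ of $\Gamma$ supplies an element $\tau\in\Gamma$ of order $i\geq 3$, so $\pi(\tau)\in GL_2(\R)$ has order $\geq 3$; a real $2\times 2$ matrix of finite order $\geq 3$ cannot have both eigenvalues in $\{\pm 1\}$ (else it would square to the identity), so the eigenvalues of $\pi(\tau)$ are a non real root of unity $\zeta$ and its conjugate $\overline{\zeta}$, which are distinct. Consequently the centralizer of $\pi(\tau)$ in $GL_2(\R)$ is the group of units of the field $\R\,Id+\R\,\pi(\tau)\cong\C$, a copy of $\C^\times$. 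Since $\Gamma$ is abelian, $\pi(\Gamma)$ lies in this centralizer, and a finite subgroup of $\C^\times$ is cyclic; thus $\Gamma$ is cyclic. As $\Z_2^{k_2}$ is a subgroup of $\Gamma$ it is then cyclic, which forces $k_2\leq 1$. Assertion (2) is now immediate: if $k_2\geq 2$, the contrapositive of (1) gives $k_i=0$ for every $i\geq 3$, so $\Gamma\cong\Z_2^{k_2}$.

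The one step genuinely particular to $\h_3$ is the first, i.e.\ checking from (\ref{aut}) that $\pi$ is a well defined homomorphism with torsion free kernel; this is where the description of $\mathrm{Aut}(\h_3)$ is used, and is essentially the only point where something could go wrong. After that the argument is the standard $2$-dimensional representation theory of finite abelian groups, the sole detail worth verifying being that a real matrix commuting with $\pi(\tau)$ lies in the $\R$-span of $Id$ and $\pi(\tau)$ — which holds because $\pi(\tau)$ has distinct eigenvalues over $\C$.
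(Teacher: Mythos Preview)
Your proof is correct and takes a genuinely different route from the paper. The paper argues directly from the explicit list of involutive automorphisms (Proposition~\ref{lemma1}): given an element $\tau'$ of order $i\ge 3$ and an involution $\tau$ commuting with it, they simultaneously diagonalise over $\C$; since the two non-central eigenvectors of $\tau'$ are complex conjugate, the corresponding eigenvalues of $\tau$ must coincide, forcing $\tau=\tau_4(\alpha_5,\alpha_6)$; one then checks by hand that two distinct $\tau_4$'s never commute, so $k_2\ge 2$ is impossible. Your argument instead factors through the projection $\pi:\mathrm{Aut}(\h_3)\to GL_2(\R)$, observes that its kernel is torsion free (so $\Gamma\cong\pi(\Gamma)$), and then invokes the standard classification of finite abelian subgroups of $GL_2(\R)$: they are cyclic or isomorphic to $\Z_2^2$. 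This is cleaner and more conceptual; it avoids any case-by-case use of Proposition~\ref{lemma1}, and it in fact yields at once the stronger conclusion (proved later in the paper by further explicit computation) that every finite abelian $\Gamma\subset\mathrm{Aut}(\h_3)$ is cyclic or $\Z_2^2$. The paper's approach, by contrast, stays closer to the concrete coordinates and reuses the involutive classification already established, which fits the expository flow but is less transferable. One small remark: your second step (conjugating into $O(2)$ and classifying there) already suffices for both assertions, so the separate centralizer argument via $\R[\pi(\tau)]\cong\C$ is redundant, though of course also valid.
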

\begin{proof}
Assume that there is $i \geq 3$ such that $k_i \geq 1$.  If $k_2 \geq 1$, there exist two automorphisms $\tau$ and $\tau'$ satisfying $\tau'^{i}=\tau^{2}=Id$ and $\tau' \circ \tau=\tau\circ \tau'.$ Thus $\tau'$ and $\tau$ can be reduced simultaneously in the diagonal form and admit a common basis of eigenvectors. Since for any $\sigma\in Aut(\h_3)$ we have $\sigma(X_3)=\Delta X_3$, $X_3$ is an eigenvector for $\tau'$ and $\tau$ associated to the eigenvalue $1$ for $\tau'$ and $\pm 1$ for $\tau$. As the two other eigenvalues of $\tau'$ are complex conjugate numbers, the corresponding eigenvectors are complex conjugate. This implies that the eigenvalues of $\tau$ distinguished of $\Delta=\pm 1$ are equal and  from Proposition \ref{lemma1}, $\tau=\tau_4(\alpha_5,\alpha_6)$. If we assume that $k_2 \geq 2$, there exist $\tau$ and $\tau''$ not equal and belonging to $\Z_2^{k_2}$. Thus we have $\tau=\tau_4(\alpha_5,\alpha_6)$ and $\tau''=\tau_4(\alpha'_5,\alpha'_6)$. But
\[
\tau_4(\alpha_5,\alpha_6)\circ \tau_4(\alpha'_5,\alpha'_6)=\tau_4(\alpha'_5,\alpha'_6)\circ \tau_4(\alpha_5,\alpha_6) \quad \Leftrightarrow \quad \alpha_5=\alpha'_5, \quad \alpha_6=\alpha'_6
\]
and $\tau=\tau''$, this contradicts  the hypothesis.
\end{proof}

From this lemma, we have to determine, in a first step, the subgroups $\Gamma $ of $Aut (\mathfrak h_3)$ isomorphic a $(\Z_2)^k $ with $k \geq 2$.

\noindent $\bullet$ Any involutive automorphism $\tau$ commuting with $\tau_1 (\alpha_3, \alpha_6) $ with $\tau \neq  \tau_1 (\alpha_3, \alpha_6) $ is equal to
$
\tau_2(-\alpha_3,\alpha_5)$ or $ \tau_4(\alpha_5,-\alpha_6)$
and  we have
 \[
\tau_1(\alpha_3,\alpha_6) \circ \tau_2(-\alpha_3,\alpha_5) = \tau_{4}\left(-\frac{\alpha_{3}\alpha_{6}}{2}-\alpha_{5},-\alpha_{6}\right) \ {\rm and }
\]

\[
\left[\tau_2(-\alpha_3,\alpha_5),\tau_{4}\left(-\frac{\alpha_{3}\alpha_{6}}{2}-\alpha_{5},-\alpha_{6}\right)\right] = 0.
\]
Thus $ \Gamma_{7}(\alpha_3,\alpha_5,\alpha_6) = \left\{Id,\tau_1(\alpha_3,\alpha_6),\tau_2(-\alpha_3,\alpha_5),\tau_{4}
\left(-\frac{\alpha_{3}\alpha_{6}}{2}-\alpha_{5},-\alpha_{6}\right)\right\}
$ is a subgroup of $Aut (\mathfrak h_ {3})$ isomorphic to $\Z^{2}_{2} $. Moreover it is the only subgroup of $ Aut (\mathfrak h_3) $ of type $ (\Z_2)^k $, $ k \geq 2 $, containing an automorphism of type $\tau_1(\alpha_3,\alpha_6)$.

\noindent $\bullet$ A direct computation shows that any abelian subgroup $\Gamma $ containing $\tau_2 (\alpha_3 \alpha_5)$ is either isomorphic to $\Z_2 $ or  equal to $\Gamma_7.$

\noindent $\bullet$ Assume that $ \tau_3(\alpha_{1},\alpha_3,\alpha_6) \in \Gamma$. The automorphisms $\tau_3(-\alpha_{1},-\alpha_2,\alpha'_6)$ and $\tau_4(\alpha_5,\alpha'_6)$ commute with $ \tau_3(\alpha_{1},\alpha_3,\alpha_6)$.
Since 
\[
\tau_3(\alpha_{1},\alpha_2,\alpha_6) \circ \tau_3(-\alpha_{1},-\alpha_2,\alpha'_6) =\tau_4\left(\frac{\alpha'_6(1 - \alpha_{1})-\alpha_{6}(1 + \alpha_{1})}{\alpha_{2}},-\alpha_{6}-\alpha'_6\right)
\]
we obtain the following subgroup, denoted $\Gamma_8(\alpha_1,\alpha_2,\alpha_6,\alpha'_6)$:
\[
\left\{Id,\tau_3(\alpha_{1},\alpha_2,\alpha_6),\tau_3(-\alpha_{1},-\alpha_2,\alpha'_6),
\tau_4\left(\frac{\alpha'_6(1 - \alpha_{1})-\alpha_{6}(1 + \alpha_{1})}{\alpha_{2}},-\alpha_{6}-\alpha'_6\right)\right\}
\]
which is isomorphic to  $\Z^{2}_{2}$.

\noindent $\bullet$ We suppose that $ \tau_4(\alpha_5,\alpha_6) \in \Gamma$. If $ \Gamma $ is not isomorphic to $ \Z_{2} $,  then $\Gamma$ is one of the groups $\Gamma_{7}, \Gamma_{8}$.
\begin{theorem}
Any finite abelian subgroup $\Gamma$ of  $Aut (\mathfrak h_3)$ isomorphic to $(\Z_2)^k$ is one of the following
\begin{enumerate}
\item $k=1$, $\Gamma=\Gamma_1(\alpha_3,\alpha_6), \
\Gamma_2(\alpha_3,\alpha_5), \
\Gamma_3(\alpha_1,\alpha_2,\alpha_6),  \ \alpha_2 \neq 0, \
\Gamma_4(\alpha_5,\alpha_6),$
\item $k=2$, $\Gamma=\Gamma_{7}(\alpha_3,\alpha_5,\alpha_6), \ \Gamma_8(\alpha_1,\alpha_2,\alpha_6,\alpha'_6).$
    \end{enumerate}
\end{theorem}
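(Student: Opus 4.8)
The plan is to establish first that $k$ can only be $1$ or $2$, and then to read off the list for each value of $k$ from results already at hand: the case $k=1$ from the corollary to Proposition~\ref{lemma1}, and the case $k=2$ from the four centralizer computations (the bullet points) immediately preceding the statement.

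For the bound on $k$, I would exploit that $\R X_3=[\h_3,\h_3]$ is characteristic, so every $\tau\in Aut(\h_3)$ induces a linear automorphism of the $2$-dimensional quotient $\h_3/[\h_3,\h_3]$; in terms of the representation (\ref{aut}) this gives a homomorphism $\pi:Aut(\h_3)\to GL_2(\R)$ carrying $\tau(\alpha_1,\dots,\alpha_6)$ to its upper-left $2\times 2$ block. A short computation with the composition law of $Aut(\h_3)$ identifies $\ker\pi=\{\tau(1,0,0,1,\alpha_5,\alpha_6):\alpha_5,\alpha_6\in\R\}$ with the additive group $(\R^2,+)$, which is torsion free; hence $\pi$ is injective on every finite subgroup. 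If $\Gamma\cong\Z_2^k$, then $\pi(\Gamma)$ is a finite abelian subgroup of $GL_2(\R)$ of exponent $2$: its elements are diagonalizable with eigenvalues in $\{+1,-1\}$ and, being pairwise commuting, are simultaneously diagonalizable, so $\pi(\Gamma)$ is conjugate to a subgroup of the diagonal matrices with entries $\pm1$, which has order $4$. Therefore $2^k=|\Gamma|=|\pi(\Gamma)|\le4$, i.e. $k\le2$.

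The case $k=1$ is then exactly the corollary to Proposition~\ref{lemma1}, producing $\Gamma_1,\Gamma_2,\Gamma_3,\Gamma_4$. For $k=2$, write $\Gamma=\{Id,\sigma_a,\sigma_b,\sigma_c\}$ with $\sigma_a\sigma_b=\sigma_c$; by Proposition~\ref{lemma1} each $\sigma$ has exactly one of the types $\tau_1,\tau_2,\tau_3,\tau_4$, and I would split into cases according to the types that occur. If some $\sigma$ has type $\tau_1$, the first bullet forces $\Gamma=\Gamma_7$. If no $\sigma$ has type $\tau_1$ but one has type $\tau_2$, the second bullet leaves only $\Gamma=\Z_2$ (impossible here) or $\Gamma=\Gamma_7$. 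If no $\sigma$ has type $\tau_1$ or $\tau_2$ but some $\sigma$, say $\sigma_a$, has type $\tau_3$, then the third bullet shows the involutions commuting with $\sigma_a$ are of type $\tau_3$ or $\tau_4$ and that the product of two commuting automorphisms of type $\tau_3$ has type $\tau_4$; so $\sigma_b,\sigma_c$ cannot both have type $\tau_4$ (that product lies in $\ker\pi$ and cannot equal $\sigma_a$), hence one of them has type $\tau_3$ and the third is their product, of type $\tau_4$. Thus $\Gamma$ contains an element of type $\tau_4$, and the fourth bullet gives $\Gamma\in\{\Gamma_7,\Gamma_8\}$; since $\Gamma_7$ contains no automorphism of type $\tau_3$, we conclude $\Gamma=\Gamma_8$. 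The only remaining possibility is that $\sigma_a,\sigma_b,\sigma_c$ all have type $\tau_4$, which is excluded because the product of two distinct automorphisms of type $\tau_4$ lies in $\ker\pi$ and so has infinite order, preventing two distinct $\tau_4$'s from lying in a common finite subgroup. Collecting the cases, $\Gamma=\Gamma_7$ or $\Gamma=\Gamma_8$.

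I expect the $k=2$ analysis — and within it the $\tau_3$ and $\tau_4$ cases — to be the main obstacle: one must check that the centralizer of each type of involution is captured exactly by the bullet points, and in particular that these two cases genuinely close up instead of producing a further $\Z_2^2$. This is precisely where the explicit composition identities $\tau_3\circ\tau_3'=\tau_4(\cdots)$ and $\tau_4\circ\tau_4'\in\ker\pi$ carry the argument, and where an incomplete case split would be easy to commit. By contrast, the bound $k\le2$ and the case $k=1$ are routine once $\pi$ and Proposition~\ref{lemma1} are in hand.
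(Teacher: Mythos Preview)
Your argument is correct and, for the enumeration itself, tracks the paper closely: the case $k=1$ is exactly the corollary to Proposition~\ref{lemma1}, and for $k=2$ you run the same centralizer analysis recorded in the four bullet points to land on $\Gamma_7$ or $\Gamma_8$. The genuine difference is your preliminary bound $k\le 2$ via the projection $\pi:Aut(\h_3)\to GL_2(\R)$ with torsion-free kernel $\ker\pi\cong(\R^2,+)$. The paper never isolates this bound in advance; it emerges only a posteriori, once the bullet-point exhaustion has produced nothing beyond $\Gamma_7$ and $\Gamma_8$. Your projection also streamlines two sub-steps that the paper handles by direct matrix identities: that two distinct automorphisms of type $\tau_4$ cannot lie in a common finite subgroup (the paper checks in the proof of its Lemma that commuting $\tau_4$'s must coincide), and that the product of two distinct commuting $\tau_3$'s must have type $\tau_4$ (the paper gives the explicit formula $\tau_3(\alpha_1,\alpha_2,\alpha_6)\circ\tau_3(-\alpha_1,-\alpha_2,\alpha'_6)=\tau_4(\cdots)$). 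The trade-off is minimal---one extra homomorphism---and what it buys is a conceptual reason for $k\le 2$ rather than a by-product of the case split.
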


\medskip

Assume now that $\Gamma$ is isomorphic to $\Z_3^{k_3}$ with $k_3 \geq 2$. If $\tau \in \Gamma_5$, its matricial representation is
\[
\begin{pmatrix}
\displaystyle\frac{-1 - \sqrt{-3 - 4 \alpha_{2} \alpha_{3}}}{2}&\alpha_2&0\\
\alpha_3&\displaystyle\frac{-1 + \displaystyle\sqrt{-3 - 4 \alpha_{2} \alpha_{3}}}{2}&0\\
\alpha_5&\alpha_6&1
\end{pmatrix}.
\]
To simplify, we put $\lambda=\displaystyle\frac{-1 - \sqrt{-3 - 4 \alpha_{2} \alpha_{3}}}{2}$.  The eigenvalues of $\tau$ are $1,j,j^2$ and the corresponding eigenvectors $X_3,V,\overline{V}$ with
\[
V=\displaystyle \left(1,-\displaystyle\frac{\lambda-j}{\alpha_2},-\displaystyle\frac{\alpha_5}{1-j}+
\frac{\alpha_6(\lambda-j)}{\alpha_2(1-j)}\right)
\]
if $\alpha_2 \neq 0$. If $\tau'$ is an automorphism of order $3$ commuting with $\tau$, then
\[
\tau' V=jV \qquad {\mbox or} \qquad  j^2V.
\]
But the two first components of $\tau'(V)$ are
\[
 \lambda' - \displaystyle\frac{\beta_2}{\alpha_2}(\lambda-j),\qquad  \beta_3-\frac{\lambda'(\lambda-j)}{\alpha_2}
\]
where $\beta_i$ and $\lambda'$ are the corresponding coefficients of the matrix of $\tau'$. This implies
\[
\alpha_2\lambda'-\beta_2(\lambda-j)=\alpha_2j \qquad {\mbox{\rm or}} \qquad \alpha_2j^2.
\]
Considering the real and complex parts of this equation, we obtain
\[
\left\{
\begin{array}{l}
\alpha_2\lambda'-\beta_2\lambda=0, \\
\beta_2j=\alpha_2j \qquad \mbox{or} \qquad \alpha_2j^2.
\end{array}
\right.
\]
As $\alpha_2 \neq 0$, we obtain $\alpha_2=\beta_2$ and $\lambda=\lambda'$. Let us compare the second component of $\tau'(V)$. We obtain
\[
\beta_3\alpha_2-\lambda'(\lambda-j)=-(\lambda-j)j \qquad \mbox{or} \quad  -(\lambda-j)j^2.
\]
As $\lambda=\lambda'$, we have in the first case $2\lambda j= j^2$ and in the second case $2\lambda j= j^3=1.$ In any case, this is impossible. Thus $\alpha_2=0$ and, from Section 2.2, $\tau = Id$. This implies that $k_3=1$ or $0$.
\begin{theorem}
Let $\Gamma$ be a finite abelian subgroup of $Aut(\h_3)$. Thus $\Gamma$ is isomorphic to one of the following group
\begin{enumerate}
\item $\Z_2 \times \Z_2$,
\item $\Z_2^{k_2} \times \Z_3^{k_3} \times \cdots \times \Z_p^{k_p}$ with $k_i=0 \ \mbox{or} \  1$ for $i=2, \cdots, p$.
    \end{enumerate}
    \end{theorem}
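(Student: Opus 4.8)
The plan is to combine the two previous theorems (the classification of the $(\Z_2)^k$ subgroups of $Aut(\h_3)$ and the structural lemma on cyclic decompositions) with the computation just carried out for the $\Z_3^{k_3}$ case, and to observe that the argument ruling out $\Z_3^2$ is essentially insensitive to the prime. First I would recall the setup: any finite abelian $\Gamma \subset Aut(\h_3)$ has a cyclic decomposition $\Z_2^{k_2} \times \Z_3^{k_3} \times \cdots \times \Z_p^{k_p}$. By the Lemma, either some $k_i \neq 0$ for $i \geq 3$ and then $k_2 \leq 1$, or $\Gamma \cong \Z_2^{k_2}$. In the latter case the preceding Theorem on $(\Z_2)^k$ subgroups shows $k_2 \leq 2$, which gives alternative (1) (when $k_2 = 2$) or lands in alternative (2) (when $k_2 \leq 1$). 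So it remains to treat the case where at least one odd prime appears.

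Next I would show that for every odd prime $q$ one has $k_q \leq 1$. For $q = 3$ this is exactly the computation preceding the statement: assuming two commuting order-$3$ automorphisms $\tau, \tau'$ forces, via simultaneous diagonalization over $\C$, that they share the eigenvector $X_3$ (eigenvalue $\Delta = 1$) and a complex-conjugate pair $V, \overline V$; comparing the first two components of $\tau'(V)$ against $jV$ or $j^2 V$ pins down $\alpha_2 = \beta_2$, $\lambda = \lambda'$, and then the third component yields the contradiction $2\lambda j = j^2$ or $=1$. For a general odd prime $q$ the same scheme applies: an order-$q$ automorphism has minimal polynomial $(x-1)(x-\mu)(x-\overline\mu)$ with $\mu$ a primitive $q$-th root of unity (as in the analysis leading to $\tau_{6}$), hence eigenvalues $1, \mu, \overline\mu$ with $X_3$ always the eigenvector for the eigenvalue $1$; two commuting such automorphisms are simultaneously diagonalizable with a common eigenbasis $\{X_3, V, \overline V\}$, and writing the compatibility of the second automorphism on $V$ gives the same system of equations, whose only solution is $\alpha_2 = 0$, i.e. (by the description of $Aut(\h_3)$ in Section 2.2, where $\alpha_2 = 0$ with a real eigenvalue of finite order forces the identity) the second automorphism is trivial. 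Hence $k_q \leq 1$ for all $q \geq 3$.

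Finally I would assemble: if some $k_i \neq 0$ with $i \geq 3$, the Lemma gives $k_2 \leq 1$ and the previous step gives $k_q \leq 1$ for each odd prime, so $\Gamma \cong \Z_2^{k_2} \times \Z_3^{k_3} \times \cdots \times \Z_p^{k_p}$ with every exponent $0$ or $1$, which is alternative (2); and if no odd prime appears we already saw $\Gamma \cong \Z_2^{k_2}$ with $k_2 \leq 2$, which is alternative (1) when $k_2 = 2$ and is the trivial case $k_2 \leq 1$ of alternative (2) otherwise. This exhausts all possibilities.

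I expect the main obstacle to be the general-prime step: one has to be confident that the contradiction derived for $q=3$ (coming from the relation between the distinct roots of unity $1, \mu, \overline\mu$ and the forced equality of eigenvalues) is not an accident of $j^3 = 1$ but persists for every primitive $q$-th root. The cleanest way to see this is to argue abstractly rather than by reproducing the coordinate computation: two commuting semisimple automorphisms fixing the line $\R X_3$ and acting on the $2$-dimensional quotient $\h_3/\R X_3$ with non-real eigenvalues must act on that quotient as scalars in the same copy of $\C = \R[\mu]$ (since the commutant of a non-scalar complex-type rotation in $GL_2(\R)$ is exactly that field), and the resulting constraint on how they act on $X_3$ — both act by $\Delta = \det = |\text{scalar}|^2$, which is positive, so $\Delta = 1$ — together with the requirement that the map be an automorphism of $\h_3$ (not merely a linear map) forces the two to generate a cyclic, not a rank-two, group. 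Making that last implication precise, or alternatively checking the coordinate identities verbatim for arbitrary $\mu$, is where the real work lies; everything else is bookkeeping with the earlier classification results.
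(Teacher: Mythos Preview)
Your plan is essentially the paper's own proof, which consists of the single sentence ``to prove the second part, we show as in the case $i=3$ that $k_i \leq 1$ as soon as $k_i \neq 0$''; you propose exactly this, fleshed out and supplemented by the cleaner centralizer-in-$GL_2(\R)$ argument. The one slip is your restriction of the general step to \emph{odd primes} $q$: from $k_2 \leq 1$ and $k_q \leq 1$ for odd primes alone you cannot conclude every $k_i \leq 1$ (nothing yet bounds $k_4$, $k_8$, $k_9$, \dots), so the assembly in your third paragraph does not quite close --- but the coordinate computation, and in particular your abstract reformulation, go through verbatim for every order $i \geq 3$, since the only feature used is that the eigenvalues on the $2\times 2$ block form a non-real conjugate pair.
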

To prove the second part, we show as in the case $i=3$ that $k_i=1$ as soon as $k_i \neq 0$.

\medskip

\noindent{\bf Remark.}
We have determined the finite abelian subgroups of $Aut(\mathfrak h_3)$. There are non-abelian finite subgroups  with elements of order at most $3$. Take for example the subgroup generated by
\[
\sigma_{1}=\begin{pmatrix}
\-1&0&0\\
0&1&0\\
0&0&-1
\end{pmatrix},
\qquad
\sigma_{2} = \begin{pmatrix}
-\frac{1}{2}&\alpha&0\\
-\frac{3}{4\alpha}&-\frac{1}{2}&0\\
0&0&1
\end{pmatrix}  \qquad \alpha \neq 0.
\]
The relations on the generators are $\sigma^{2}_{1} = Id,\  \sigma^{3}_{2} = Id, \ \sigma_{1}\sigma_{2}\sigma_{1} = \sigma^{2}_{2}.
$
Thus the group generated by $\sigma_1$ and $\sigma_2$ is isomorphic to the symmetric group $\Sigma_3$ of degree  
$3$.

\subsection{Description of the $\Z_2$ and $\Z_2^2$-gradings of $\h_3$}
Let $\Gamma$ be a finite abelian subgroup of $Aut(\h_3)$ isomorphic to $\Z_2^k$ ($k=1$ or $2$). 

$\bullet$ If $\Gamma = \Z_2$, we have obtained $\Gamma=\Gamma_i$, $i=1,2,3,4.$ Up to equivalence of gradings, the $\Z_2$-grading of $\h_3$ are:
$$\h_3=\R\{X_2\} \bigoplus \R\{X_1,X_3\} \quad {\rm and} \quad \h_3=\R\{X_1\} \bigoplus \R\{X_2,X_3\}.$$

\medskip

$\bullet$ If  $\Gamma = \Z^{2}_2$ then  $\Gamma = \Gamma_7$ or  $\Gamma = \Gamma_8$. 
\begin{lemma}
There is an automorphism $\sigma \in Aut(\mathfrak h_{3})$ such that
\[
\sigma^{-1} \Gamma_{7} \sigma = \Gamma_{8}.
\]
\end{lemma}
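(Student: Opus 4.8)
The plan is to pass from the subgroups $\Gamma_7$ and $\Gamma_8$ of $Aut(\h_3)$ to the $\Z_2^2$-gradings of $\h_3$ they induce, to prove that every $\Z_2^2$-grading of $\h_3$ has, up to a relabelling of the three non-trivial characters, one and the same shape, and then to exhibit an automorphism $\sigma$ carrying the grading attached to $\Gamma_7$ onto the one attached to $\Gamma_8$. Conjugation by such a $\sigma$ will then interchange the two subgroups.

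First I would pin down the shape of the decomposition $\h_3=\g_\epsilon\oplus\g_a\oplus\g_b\oplus\g_c$ attached to a subgroup $\hat\Gamma\cong\Z_2^2$ of $Aut(\h_3)$. Since every automorphism sends $X_3$ to a multiple of itself (as recalled in the proof of the cyclic decomposition lemma, $\sigma(X_3)=\Delta X_3$), the line $\R X_3$ is homogeneous, and after relabelling I may assume $X_3\in\g_c$ with $c\neq\epsilon$: indeed, if $X_3\in\g_\epsilon$, then each of the two generators fixes $X_3$, hence has $\Delta=1$ and acts on the plane $\h_3/\R X_3$ by an involution of determinant $1$, i.e. by $\pm Id$, so both are of type $\tau_4$; but two distinct automorphisms of type $\tau_4$ do not commute (a computation already in the excerpt), so $\hat\Gamma$ could not have order $4$. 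With $X_3\in\g_c$, the relations $[\g_\gamma,\g_{\gamma'}]\subset\g_{\gamma\gamma'}\cap[\h_3,\h_3]=\g_{\gamma\gamma'}\cap\R X_3$ show that the non-degenerate alternating form induced by the bracket on $\h_3/\R X_3$ only pairs $\bar\g_a$ with $\bar\g_b$ and $\bar\g_\epsilon$ with $\bar\g_c$; a dimension count then leaves two possibilities, and the one with $\g_a=\g_b=0$ is excluded because the two generators would act by the same scalar on the $2$-plane $\g_c$ and trivially on $\g_\epsilon$, hence coincide. Therefore $\g_\epsilon=0$, $\dim\g_a=\dim\g_b=1$ and $\g_c=\R X_3$; moreover, as $\g_\epsilon=0$ the group $\hat\Gamma$ acts faithfully and diagonally in a basis $(u,v,X_3)$ with $\R u=\g_a$, $\R v=\g_b$, and, having order $4$, it is exactly the set of the four automorphisms that are diagonal with entries $\pm1$ in that basis.

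Next, given two subgroups $\hat\Gamma,\hat\Gamma'\cong\Z_2^2$ with such bases $(u,v,X_3)$ and $(u',v',X_3)$, I would define $\sigma$ directly on the basis by $\sigma(u)=u'$, $\sigma(v)=v'$, $\sigma(X_3)=\frac{c'}{c}X_3$, where $[u,v]=cX_3$ and $[u',v']=c'X_3$ with $c,c'\neq 0$ (they are non-zero since $(u,v,X_3)$ and $(u',v',X_3)$ are bases). A one-line check shows $\sigma$ preserves the bracket, so $\sigma\in Aut(\h_3)$; since $\sigma$ carries $(u,v,X_3)$ to a basis proportional to $(u',v',X_3)$, conjugation by $\sigma$ sends the four $\pm1$-diagonal automorphisms of the first basis bijectively onto those of the second, i.e. $\sigma\hat\Gamma\sigma^{-1}=\hat\Gamma'$. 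Applying this with $\hat\Gamma=\Gamma_7(\alpha_3,\alpha_5,\alpha_6)$ and $\hat\Gamma'=\Gamma_8(\alpha_1,\alpha_2,\alpha_6,\alpha'_6)$ — after checking, from the explicit matrices of $\tau_1,\tau_2,\tau_3,\tau_4$, that both subgroups are of the type just described — and replacing $\sigma$ by $\sigma^{-1}$ gives $\sigma^{-1}\Gamma_7\sigma=\Gamma_8$.

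I expect the normal-form step to be the main obstacle: one must rule out every degenerate configuration of the grading (in particular that $X_3$ cannot lie in $\g_\epsilon$ and that $\g_a,\g_b$ are genuinely one-dimensional), and this is exactly where Proposition \ref{lemma1} and the non-commutativity of distinct automorphisms of type $\tau_4$ are used. Once the normal form is in hand, building $\sigma$ is routine — in essence because $Aut(\h_3)$ surjects onto $GL(\h_3/\R X_3)$ by (\ref{aut}). An alternative, purely computational route, probably the shortest to write out, is to pick explicit eigenbases of $\Gamma_7$ and of $\Gamma_8$, take for $\sigma$ the corresponding change-of-basis matrix, and verify $\sigma^{-1}\Gamma_7\sigma=\Gamma_8$ by multiplying out the explicit matrices of the $\tau_i$.
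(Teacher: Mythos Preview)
Your argument is correct, but it is a genuinely different route from the paper's. The paper simply asserts that the proof is a ``simple computation'': one writes down explicit eigenbases for $\Gamma_7(\alpha_3,\alpha_5,\alpha_6)$ and $\Gamma_8(\alpha_1,\alpha_2,\alpha_6,\alpha'_6)$, takes $\sigma$ to be the change-of-basis automorphism, and checks by matrix multiplication that conjugation by $\sigma$ carries one subgroup to the other --- exactly the alternative you mention in your last sentence. Your approach instead proves a structural normal form: every $\Z_2^2$-subgroup of $Aut(\h_3)$ is the group of $\pm1$-diagonal automorphisms in some basis $(u,v,X_3)$ with $[u,v]\neq 0$, whence any two such subgroups are conjugate. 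This is longer than necessary for the Lemma alone, but it buys you the subsequent Proposition (that every $\Z_2^2$-grading of $\h_3$ is equivalent to the standard one) for free, whereas the paper obtains that Proposition only after a further reduction of $\Gamma_7(\alpha_3,\alpha_5,\alpha_6)$ to $\Gamma_7(0,0,0)$. Your exclusion of the case $X_3\in\g_\epsilon$ via the non-commutativity of distinct $\tau_4$'s, and the dimension count on $\h_3/\R X_3$ via the induced symplectic form, are both sound; the construction of $\sigma$ from two eigenbases is then indeed routine, since $Aut(\h_3)$ surjects onto $GL(\h_3/\R X_3)$ by (\ref{aut}).
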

\noindent The proof is a simple computation.
There also exists $\sigma\in Aut(\mathfrak h_3)$ such that
\[
\left\{
\begin{array}{l}
 \sigma^{-1} \tau_{1}(\alpha_{3},\alpha_{6}) \sigma = \tau_{1}(0,0), \\
 \sigma^{-1} \tau_{2}(-\alpha_{3},\alpha_{5}) \sigma = \tau_{2}(0,0).
 \end {array}
 \right.
 \]
We deduce:
\begin{proposition}
Every $\Z^{2}_{2}$-grading on $\mathfrak h_{3}$ is equivalent to the grading defined by
\[
\Gamma_{7}(0,0,0) = \{ Id,\tau_{1}(0,0),\tau_{2}(0,0),\tau_{4}(0,0)\}.
\]
\end{proposition}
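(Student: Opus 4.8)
The goal is to show that every $\Z_2^2$-grading of $\h_3$ is equivalent (in the sense of gradings, i.e., up to $\mathrm{Aut}(\h_3)$ and $\mathrm{Aut}(\Z_2^2)$) to the one attached to $\Gamma_7(0,0,0)=\{Id,\tau_1(0,0),\tau_2(0,0),\tau_4(0,0)\}$. The plan is to combine two reductions already set up in the text: first, by the preceding lemma there is $\sigma\in\mathrm{Aut}(\h_3)$ with $\sigma^{-1}\Gamma_7\sigma=\Gamma_8$, so it suffices to treat the family $\Gamma_7(\alpha_3,\alpha_5,\alpha_6)$; second, by the displayed conjugation relations there is $\sigma\in\mathrm{Aut}(\h_3)$ simultaneously normalizing the family so that $\sigma^{-1}\tau_1(\alpha_3,\alpha_6)\sigma=\tau_1(0,0)$ and $\sigma^{-1}\tau_2(-\alpha_3,\alpha_5)\sigma=\tau_2(0,0)$. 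Since the third nontrivial element of any such group is the product of the first two, conjugating by this $\sigma$ automatically sends $\tau_4\bigl(-\tfrac{\alpha_3\alpha_6}{2}-\alpha_5,-\alpha_6\bigr)$ to $\tau_1(0,0)\tau_2(0,0)=\tau_4(0,0)$. Hence $\sigma^{-1}\Gamma_7(\alpha_3,\alpha_5,\alpha_6)\sigma=\Gamma_7(0,0,0)$ for every admissible choice of parameters.

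Translating this into the language of gradings: a $\Z_2^2$-grading of $\h_3$ is, by the Bahturin--Goze correspondence recalled in Section~2, the same datum as the simultaneous eigenspace decomposition of the four pairwise-commuting involutions forming a subgroup $\Gamma\subset\mathrm{Aut}(\h_3)$ isomorphic to $\Z_2^2$; by the classification theorem just proved, $\Gamma$ is conjugate to $\Gamma_7$ or $\Gamma_8$, and by the lemma $\Gamma_8$ is itself conjugate to a member of the $\Gamma_7$-family. Conjugation of the grading group by $\pi=\sigma\in\mathrm{Aut}(\h_3)$ carries the root spaces $\g_\gamma$ to $\pi(\g_\gamma)$, which is exactly the equivalence relation on gradings defined in the text (with the group automorphism $\omega$ taken to be the identity, since we conjugate each $\si_\gamma$ to the corresponding standard one without permuting the index set). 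Therefore the grading attached to $\Gamma$ is equivalent to the grading attached to $\Gamma_7(0,0,0)$, which decomposes $\h_3$ as $\{0\}\oplus\R\{X_1\}\oplus\R\{X_2\}\oplus\R\{X_3\}$ with respect to $(\tau_1(0,0),\tau_2(0,0))$ acting diagonally with signs $(-,+)$, $(+,-)$, $(-,-)$ on $X_1,X_2,X_3$ respectively.

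The only point that needs actual verification is the existence of the single automorphism $\sigma$ realizing both conjugations $\sigma^{-1}\tau_1(\alpha_3,\alpha_6)\sigma=\tau_1(0,0)$ and $\sigma^{-1}\tau_2(-\alpha_3,\alpha_5)\sigma=\tau_2(0,0)$ at once; this is a finite linear computation using the explicit matrix form \eqref{aut}. Writing $\sigma=\tau(\beta_1,\dots,\beta_6)$, the off-diagonal parameters $\alpha_3,\alpha_5,\alpha_6$ of $\tau_1,\tau_2$ are absorbed by adjusting $\beta_5,\beta_6$ (and possibly $\beta_3$), while the diagonal blocks of $\tau_1,\tau_2$ are already in the normal form $\mathrm{diag}(-1,1)$ and $\mathrm{diag}(1,-1)$ and so are fixed by any diagonal-block conjugation; the compatibility of the two equations follows because $\tau_1$ and $\tau_2(-\alpha_3,\cdot)$ share the same eigenline structure forced by commutativity. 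I expect this computation to be the main (and essentially only) obstacle, and it is the routine "simple computation" referred to in the statement of the preceding lemma; once it is in place, the proposition follows immediately from the two reductions above.
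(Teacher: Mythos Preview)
Your proposal is correct and follows essentially the same route as the paper: the paper's argument is precisely the two reductions you describe—first the lemma conjugating $\Gamma_8$ into the $\Gamma_7$ family, then the displayed simultaneous conjugation sending $\tau_1(\alpha_3,\alpha_6)$ and $\tau_2(-\alpha_3,\alpha_5)$ to $\tau_1(0,0)$ and $\tau_2(0,0)$—after which the proposition is simply stated as a deduction. Your additional remark that the fourth element $\tau_4$ is carried along automatically (being the product of the other two) makes explicit a step the paper leaves tacit.
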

\noindent This grading corresponds to
\[
\mathfrak h_{3} =\{0\} \oplus \R(X_{1})\oplus \R(X_{2}) \oplus \R(X_{3}).
\]

\subsection{Non existence of Riemannian symmetric structures on $\mathbb{H}_3/H$}

Consider the symmetric space 
$\mathbb{H}_3/H_1$ associated with the grading 
$$\h_3=\R \{X_2\} \bigoplus \R \{X_1, X_3\}.$$
 Let $\{ \omega_1, \omega_2 ,\omega_3 \}$ be the dual basis of $\{X_1,X_2,X_3 \}$. Any pseudo Riemannian metric on the symmetric space $\mathbb{H}_3/H_1$ where $H_1$ is a one-dimensional connected Lie group whose Lie algebra $\g_0=\R (X_2)$ is given by a non degenerate bilinear form $B=a\omega_1 ^2+ b \omega_1 \wedge \omega_3 + c \omega_3 ^2 $ on $\g_1=\R (X_1,X_3)$ which is $ad \g_0$-invariant. This implies
$$B([X_2,X_1],X_3)=-B(X_3,X_3)=-c=0.$$
But we have also
$$B([X_2,X_1],X_1)+B(X_1,[X_2,X_1])=-2B(X_3,X_1)=-2b=0.$$
We deduce
\begin{proposition}
The nilpotent symmetric space $\mathbb{H}_3/H$ associated to the grading
 $$\h_3=\R \{X_2\} \bigoplus \R \{X_1, X_3\}$$
 doesn't admit any pseudo-Riemannian symmetric metric.
 \end{proposition}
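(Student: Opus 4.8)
The plan is to reduce the statement to a one-line linear-algebra computation on the two-dimensional space $\m=\R\{X_1,X_3\}$ appearing in the reductive decomposition $\h_3=\g_0\oplus\m$ with $\g_0=\R\{X_2\}$ (here $H=H_1$ is the one-dimensional connected subgroup with Lie algebra $\g_0$). On a symmetric space a pseudo-Riemannian symmetric metric is exactly a $\HH_3$-invariant pseudo-Riemannian metric, since the Levi-Civita connection of such a metric is a $\HH_3$-invariant torsion-free connection and therefore coincides with the canonical connection of the symmetric space. By the reductive correspondence recalled above, such a metric is in turn the same datum as a non-degenerate symmetric bilinear form $B$ on $\m$ satisfying $B([Z,X],Y)+B(X,[Z,Y])=0$ for all $Z\in\g_0$ and $X,Y\in\m$. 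So it suffices to show that no non-degenerate $B$ of this kind exists.

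I would then write $B=a\,\omega_1^2+b\,\omega_1\omega_3+c\,\omega_3^2$ on $\m$, i.e. $B(X_1,X_1)=a$, $B(X_1,X_3)=b$, $B(X_3,X_3)=c$. The only bracket between $\g_0$ and $\m$ is $[X_2,X_1]=-X_3$, while $[X_2,X_3]=0$. Taking $Z=X_2$ in the invariance relation, the pair $(X,Y)=(X_1,X_3)$ gives $B(-X_3,X_3)+B(X_1,0)=-c=0$, and the pair $(X,Y)=(X_1,X_1)$ gives $2B(-X_3,X_1)=-2b=0$. Hence $b=c=0$, so the Gram matrix of $B$ in the basis $\{X_1,X_3\}$ equals $\mathrm{diag}(a,0)$, which is degenerate for every value of $a$. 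This contradicts the non-degeneracy of $B$, and therefore $\HH_3/H$ carries no pseudo-Riemannian symmetric metric.

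I do not expect a genuine obstacle here; the whole content is the computation above. What underlies it is that $\m$ is only two-dimensional and that $\ad_{X_2}$ restricts on $\m$ to a non-zero nilpotent endomorphism ($X_1\mapsto -X_3\mapsto 0$): on a two-dimensional space no non-degenerate symmetric bilinear form can make a non-zero nilpotent endomorphism skew-symmetric, and the two displayed identities check this directly in the case at hand. The only point that must be argued rather than computed is the first reduction — that a pseudo-Riemannian symmetric metric is automatically $\HH_3$-invariant and hence governed by the $\ad\g_0$-invariance condition on $\m$ — but this is immediate, so in practice the proof is just the two evaluations of the invariance relation.
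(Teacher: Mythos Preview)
Your proposal is correct and follows essentially the same argument as the paper: both reduce to the $\ad\g_0$-invariance condition on $\m=\R\{X_1,X_3\}$ and evaluate it at the pairs $(X_1,X_3)$ and $(X_1,X_1)$ to force $c=b=0$, making $B$ degenerate. Your additional remarks on the reduction step and on the nilpotency of $\ad_{X_2}\!\mid_\m$ are sound but not needed beyond what the paper already assumes.
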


\medskip

Consider now the symmetric space $\mathbb{H}_3/H_2$ associated with the grading 
$$\h_3=\R \{X_3\} \bigoplus \R \{X_1, X_2\}.$$
Then $H_2$ is the Lie subgroup whose Lie algebra is $\R\{X_3\}$ and the bilinear form $B=a\omega_1 ^2+ b \omega_1 \wedge \omega_2 + c \omega_2 ^2 $ on $\g_1=\R (X_1,X_2)$ is $ad X_3$-invariant because $ad X_3=0$.  But $Ad_G$ is an homomorphism of $G$ onto the group of inner automorphisms of $\g$ with kernel the center of $G$, we deduce that $Ad_G(H)$ is compact in this case and any non degenerate bilinear form $B$ on $\g_1$ defines a Riemannian or a Lorentzian structure on the symmetric space $\mathbb{H}_3/H_2.$
\begin{proposition}
The nilpotent symmetric space $\mathbb{H}_3/H_2$ associated to the grading
 $$\h_3=\R \{X_3\} \bigoplus \R \{X_1, X_2\}$$
 admits a structure of Riemannian symmetric space. It admits also a structure of Lorentzian symmetric space.
 \end{proposition}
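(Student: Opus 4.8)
The plan is to obtain both metrics at once from the correspondence recalled in Section~2.4 between $G$-invariant pseudo-Riemannian metrics on a reductive homogeneous space $M=G/H$ and non-degenerate $\ad\,\g_\epsilon$-invariant symmetric bilinear forms on $\m$, the point being that for $M=\HH_3/H_2$ this correspondence carries no constraint whatsoever.

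First I would collect the structural facts already established above. The splitting $\h_3=\R\{X_3\}\oplus\R\{X_1,X_2\}$ is a genuine $\Z_2$-grading: $[\R\{X_1,X_2\},\R\{X_1,X_2\}]=\R X_3$, and $X_3$ is central, so $[\R X_3,\h_3]=0$. Since $\HH_3$ is simply connected (it is diffeomorphic to $\R^3$) and $H_2$ is connected, this grading defines a symmetric structure $(\HH_3,H_2,\sigma)$ with $\sigma$ acting as $+\mathrm{Id}$ on $\g_\epsilon=\R X_3$ and as $-\mathrm{Id}$ on $\m=\R\{X_1,X_2\}$. Furthermore $H_2=\exp(\R X_3)$ coincides with the center of $\HH_3$, so $Ad_G(H_2)=\{\mathrm{Id}\}$ is compact, and since $\ad\,X_3=0$ the invariance condition $B([X_3,X],Y)+B(X,[X_3,Y])=0$ defining admissible bilinear forms on $\m$ is vacuous. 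Hence every non-degenerate symmetric bilinear form $B$ on $\m$ defines a $G$-invariant pseudo-Riemannian metric $g_B$ on $M$.

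Next I would verify that each $g_B$ is genuinely a symmetric-space metric, not merely a homogeneous one. The automorphism $\sigma$ of $\HH_3$ descends to a diffeomorphism $s_{\overline{e}}$ of $M$ fixing $\overline{e}$, satisfying $s_{\overline{e}}\circ L_g=L_{\sigma(g)}\circ s_{\overline{e}}$ and with differential $-\mathrm{Id}$ on $T_{\overline{e}}M\simeq\m$; since $B(-X,-Y)=B(X,Y)$ and $g_B$ is $G$-invariant, $s_{\overline{e}}$ is a global isometry, and conjugating it by left translations produces an isometric symmetry at every point of $M$. Thus $(M,g_B)$ is a pseudo-Riemannian symmetric space; equivalently, its first canonical connection, being $G$-invariant and torsion-free (because $[\m,\m]\subset\R X_3$) and hence equal to the Levi-Civita connection of $g_B$, has parallel curvature.

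It remains to make the two choices. Let $\{\omega_1,\omega_2,\omega_3\}$ be the dual basis of $\{X_1,X_2,X_3\}$. Taking $B=\omega_1^2+\omega_2^2$, which is positive definite, gives a Riemannian symmetric metric on $M$. Since $\dim M=2$, a Lorentzian metric on $M$ is one of signature $(1,1)$; taking $B=\omega_1^2-\omega_2^2$, non-degenerate of signature $(1,1)$, gives a Lorentzian symmetric metric on $M$, which settles both claims. The argument has essentially no hard step: once $H_2$ is identified with the center of $\HH_3$, compactness of $Ad_G(H_2)$ and the vanishing of $\ad\,X_3$ make every non-degenerate $B$ admissible, and the only point deserving a word of justification is that the resulting metric is symmetric rather than merely homogeneous, a consequence of the invariance of $B$ under $-\mathrm{Id}$.
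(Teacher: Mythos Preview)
Your proof is correct and follows essentially the same approach as the paper: both arguments hinge on the observation that $H_2$ is the center of $\HH_3$, so $Ad_G(H_2)=\{\mathrm{Id}\}$ is compact and $\ad X_3=0$ makes the invariance condition on $B$ vacuous, whence any non-degenerate bilinear form on $\m$ yields a symmetric (pseudo-)Riemannian metric. Your version is more explicit than the paper's (you verify the grading, check that the symmetries are isometries, and exhibit concrete choices of $B$), but the underlying strategy is identical.
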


\subsection{Riemannian $\Z_2^2$-symmetric structures on $\mathbb{H}_3$}

Consider on $\mathbb{H}_3$ a  $\Z^2_2 $-symmetric structure. It is determined, up to equivalence, by the $\Z^2_2 $-grading of $\mathfrak h_3$
\[
\mathfrak h_{3} =\{0\} \oplus \R(X_{1})\oplus \R(X_{2}) \oplus \R(X_{3}).
\]
Since every automorphism of $\mathfrak h_3 $ is an isometry of any invariant  Riemannian metric on
$\mathbb H
_3 $, we deduce
\begin{theorem}
Any Riemannian structure $\Z^2_2 $-symmetric over $\mathbb H
_3 $ is isometric to the Riemannian structure associated with the grading
\[
\mathfrak h_{3} =\{0\} \oplus \R(X_{1})\oplus \R(X_{2}) \oplus \R(X_{3})
\]
and the $\Z_2^2$-symmetric Riemannian metric is written
\[
g = \omega^{2}_{1} + \omega^{2}_{2} + \lambda^{2} \omega^{2}_{3} \qquad {\rm with} \qquad \lambda \neq 0,
\]
 where $\{\omega_1,\omega_2,\omega_3\}$ is the dual basis of $\{X_1,X_2,X_3\}$.
\end{theorem}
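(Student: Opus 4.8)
The plan is to reduce the classification of $\Z_2^2$-symmetric Riemannian metrics on $\HH_3$ to two elementary facts already assembled in the excerpt: first, that every such structure is equivalent, as a graded Lie algebra, to the standard grading $\h_3 = \{0\}\oplus\R(X_1)\oplus\R(X_2)\oplus\R(X_3)$; and second, that the $\Z_2^2$-symmetric metrics on a fixed grading are exactly the $\mathrm{ad}\,\g_\epsilon$-invariant bilinear forms $B$ on $\m$ that are block-diagonal with respect to the root-space decomposition and positive definite. Since here $\g_\epsilon = \{0\}$, the invariance condition is vacuous, so the only constraints are that $B$ be positive definite and that $B(\g_\gamma,\g_{\gamma'}) = 0$ for distinct nontrivial $\gamma,\gamma'$ (Definition of Riemannian $\Z_2^k$-symmetric, condition (1)). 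In the basis $\{X_1,X_2,X_3\}$ this forces $B$ to be diagonal, i.e. $g = a\,\omega_1^2 + b\,\omega_2^2 + c\,\omega_3^2$ with $a,b,c>0$.

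Next I would pass from this three-parameter family to the normal form $g = \omega_1^2 + \omega_2^2 + \lambda^2\omega_3^2$ by exhibiting, for given positive $a,b,c$, an automorphism of $\h_3$ carrying the standard grading to itself and rescaling the coframe appropriately. Concretely, a diagonal automorphism $\tau(\sqrt a,0,0,\sqrt b,0,0)$ of the form \eqref{aut} multiplies $X_1$ by $\sqrt a$, $X_2$ by $\sqrt b$, and $X_3$ by $\Delta = \sqrt{ab}$; dually it sends $\omega_1\mapsto \omega_1/\sqrt a$, $\omega_2\mapsto\omega_2/\sqrt b$, $\omega_3\mapsto\omega_3/\sqrt{ab}$, so that the pullback of $a\omega_1^2+b\omega_2^2+c\omega_3^2$ is $\omega_1^2+\omega_2^2+(c/ab)\,\omega_3^2$. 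Setting $\lambda^2 = c/ab$ gives the asserted form, and since $\tau$ preserves the standard $\Z_2^2$-grading it is simultaneously an isometry and an equivalence of the $\Z_2^2$-symmetric structures; the remark preceding the theorem (every automorphism of $\h_3$ is an isometry of any invariant Riemannian metric) is exactly what legitimizes this step. Finally, one checks that distinct values of $|\lambda|$ give non-isometric metrics — this can be deferred to the comparison with left-invariant metrics carried out elsewhere in the paper, or seen directly since $\lambda^2$ (up to the scaling ambiguity fixed by normalizing the other two coefficients to $1$) is an isometry invariant related to the norm of the commutator.

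The only genuinely delicate point is bookkeeping about \emph{which} automorphisms are allowed: one must use that the equivalence relation on $\Z_2^2$-symmetric structures is generated by automorphisms of $\h_3$ together with automorphisms of the group $\Z_2^2$, and that the diagonal rescaling above does preserve the standard grading setwise (it fixes each root line), so it is a legitimate equivalence and not merely an abstract linear isometry. Everything else is routine linear algebra on $3\times 3$ matrices, and no curvature computation is needed for the statement as phrased — the word ``Riemannian $\Z_2^2$-symmetric'' already packages the curvature/parallelism content into the algebraic conditions on $B$.
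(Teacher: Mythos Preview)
Your proposal is correct and follows essentially the same line as the paper's proof: both observe that orthogonality of the homogeneous components forces $g=\alpha_1\omega_1^2+\alpha_2\omega_2^2+\alpha_3\omega_3^2$ with $\alpha_i>0$, and then reduce to $\alpha_1=\alpha_2=1$ via an automorphism of $\h_3$. The only difference is that the paper invokes the classification of left-invariant metrics in \cite{[G.P1]} for the reduction step, whereas you write down the diagonal automorphism explicitly; your version is more self-contained but otherwise identical in spirit.
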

\begin{proof}
Indeed, since the components of the grading are orthogonal, the Riemannian metric $ g $, which coincides with the form $ B $ satisfies
\[
g=\alpha_{1} \omega^{2}_{1} + \alpha_{2} \omega^{2}_{2} + \alpha_{3} \omega^{2}_{3} \qquad {\rm with} \qquad  \alpha_{1} > 0,\alpha_{2}> 0,\alpha_{3}> 0.
\]
According to \cite{[G.P1]}, we reduce the coefficients to  $\alpha_1 =\alpha_2 =1$.
\end{proof}

\medskip

\noindent{\bf Remark.} According to \cite{[H]} and \cite{[G.P2]}, this metric is naturally reductive for any $\lambda$.

\begin{corollary} A Riemannian tensor $g$ on $\mathbb H_3 $ determines a $\Z^2_2 $-symmetric Riemannian structure  over $\mathbb H
_3 $ if and only if it is a left-invariant metric on $\mathbb H_3 $.
\end{corollary}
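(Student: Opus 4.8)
The plan is to use that, for the (essentially unique, by the preceding Proposition) $\z$-grading $\mathfrak h_3 = \{0\}\oplus\mathbb R X_1\oplus\mathbb R X_2\oplus\mathbb R X_3$ one has $\mathfrak g_\epsilon=\{0\}$, so that the isotropy subgroup is trivial and $\HH_3/H = \HH_3$; consequently a $G$-invariant Riemannian metric on $\HH_3/H$ is nothing but a left-invariant metric on $\HH_3$. The ``only if'' direction is then immediate: if $g$ underlies a Riemannian $\z$-symmetric structure on $\HH_3$, then by definition $g$ is a $G$-invariant metric on $\HH_3/H=\HH_3$, hence left-invariant.

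For the ``if'' direction, let $g$ be an arbitrary left-invariant metric on $\HH_3$, i.e. an inner product $q$ on $\mathfrak h_3 = T_e\HH_3$. It suffices to exhibit a $\z$-grading of $\mathfrak h_3$ of the form $\{0\}\oplus\mathbb R Y_1\oplus\mathbb R Y_2\oplus\mathbb R Y_3$ whose homogeneous components are mutually $q$-orthogonal: indeed conditions (1) and (2) of the definition of a Riemannian $\z$-symmetric space then hold ($q$ being positive definite), and $Ad_G(H)$ is trivially compact since $H$ is trivial. To build such a basis I would argue as follows. The derived ideal of $\mathfrak h_3$ coincides with its centre $\mathbb R X_3$, an intrinsic line; pick a $q$-unit vector $Z$ spanning it and let $V = Z^{\perp_q}$ be the $q$-orthogonal $2$-plane. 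Since $Z$ is central, $[\mathfrak h_3,\mathfrak h_3]=[V,V]$, hence $[V,V]=\mathbb R Z\neq 0$, so for any $q$-orthonormal basis $\{e_1,e_2\}$ of $V$ one gets $[e_1,e_2]=cZ$ with $c\neq 0$. Setting $Y_1=e_1$, $Y_2=e_2$, $Y_3=[Y_1,Y_2]=cZ$, the triple $\{Y_1,Y_2,Y_3\}$ satisfies the standard Heisenberg relations $[Y_1,Y_2]=Y_3$, $[Y_1,Y_3]=[Y_2,Y_3]=0$, so $X_i\mapsto Y_i$ is an automorphism of $\mathfrak h_3$ and $\{0\}\oplus\mathbb R Y_1\oplus\mathbb R Y_2\oplus\mathbb R Y_3$ is a $\z$-grading (equivalent to the standard one); moreover $q(Y_i,Y_j)=0$ for $i\neq j$ by construction, so its components are $q$-orthogonal, as required.

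Putting the two parts together yields the stated equivalence; combined with the Theorem it also shows that, up to isometry, these metrics are precisely the $g_\lambda=\omega_1^2+\omega_2^2+\lambda^2\omega_3^2$ with $\lambda\neq 0$, since reading $q$ off in the basis $\{Y_1,Y_2,Y_3\}$ gives $\mathrm{diag}(1,1,c^2)$. The only substantial step is the construction of the $q$-orthogonal Heisenberg basis in the ``if'' part --- essentially the classification of left-invariant metrics on $\HH_3$ up to automorphism, in the spirit of \cite{[G.P1]} --- and within it the one genuine point to check is that $c\neq 0$, equivalently that no $2$-plane complementary to the centre is abelian; everything else is formal bookkeeping with the definitions and the Theorem.
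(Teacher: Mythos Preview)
Your proof is correct and follows essentially the same route as the paper. The paper's argument is the one-line remark that the Corollary ``is a consequence of the previous theorem and of the classification of left-invariant metrics on Heisenberg groups \cite{[G.P1]}''; what you do in the ``if'' direction---picking a $q$-unit vector in the centre, taking its $q$-orthogonal complement, and building a $q$-orthogonal Heisenberg basis---is precisely the construction underlying that classification, as you yourself note. So the logical skeleton is identical: the ``only if'' direction comes from $H=\{e\}$, and the ``if'' direction amounts to transporting the standard grading along the automorphism furnished by \cite{[G.P1]}; you simply make that automorphism explicit rather than citing it.
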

This is a consequence of the previous theorem and of the classification of left-invariant metrics on Heisenberg groups (\cite{[G.P1]}).

\subsection{Lorentzian  $\Z^{2}_{2} $-symmetric structures on $\mathbb H_{3}$}
We say that an homogeneous space $(M = G/H, g)$ is {\it Lorentzian} if the canonical action of $G$ on $M$ preserves a Lorentzian metric (i.e. a smooth field  of non degenerate quadratic forms of signature $(n-1,1)$) (see \cite{[C]}).
\begin{proposition}[\cite{[D.Z]}]
Modulo an automorphism and a multiplicative constant, there exists on $\mathbb H
_{3}$ one left-invariant metric assigning a strictly positive length on the center of
$\mathfrak h_{3} $.
\end{proposition}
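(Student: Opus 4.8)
The plan is to read the statement as a normal-form problem for symmetric bilinear forms on $\mathfrak h_3$ and to carry it out step by step using the explicit shape (\ref{aut}) of $Aut(\mathfrak h_3)$. A left-invariant Lorentzian metric on $\mathbb H_3$ is the same datum as a symmetric bilinear form $B$ on $\mathfrak h_3$ of signature $(2,1)$, and since the center of $\mathfrak h_3$ is $\R X_3$, the hypothesis ``strictly positive length on the center'' means exactly $B(X_3,X_3)>0$. Such a $B$ plainly exists (for instance $\omega_1^2-\omega_2^2+\omega_3^2$), so the whole point is to prove uniqueness: any two such forms coincide after applying an automorphism of $\mathfrak h_3$ and multiplying by a positive scalar.

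First I would split off the center. Since $B(X_3,X_3)>0$ the vector $X_3$ is non-null, hence $\mathfrak h_3=\R X_3\oplus V$ with $V:=X_3^{\perp_B}$, and $B|_V$ has signature $(1,1)$. Because $V\cap\R X_3=0$, the quotient map $p\colon\mathfrak h_3\to\mathfrak h_3/[\mathfrak h_3,\mathfrak h_3]=\mathfrak h_3/\R X_3$ restricts to an isomorphism of $V$ onto $\mathfrak h_3/\R X_3$, so $V$ has a basis $X_1+aX_3,\ X_2+bX_3$ for suitable $a,b\in\R$. The map $\tau$ with $\tau(X_1)=X_1+aX_3$, $\tau(X_2)=X_2+bX_3$, $\tau(X_3)=X_3$ has the form (\ref{aut}) with $\Delta=1$, so it is an automorphism, and $(\tau^{*}B)(X_i,X_3)=B(\tau X_i,X_3)=0$ for $i=1,2$ because $\tau X_i\in V$. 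Replacing $B$ by $\tau^{*}B$, I may assume $B=B''+\mu\,\omega_3^2$, where $B''$ is the restriction of $B$ to $\R\{X_1,X_2\}$, of signature $(1,1)$, and $\mu=B(X_3,X_3)>0$.

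Next I would normalize the two blocks. The automorphisms of the form (\ref{aut}) with $\alpha_5=\alpha_6=0$ induce on $\R\{X_1,X_2\}$ an arbitrary matrix $A\in GL_2(\R)$, send $X_3$ to $(\det A)X_3$, and hence preserve the relations $B(X_i,X_3)=0$; by Sylvester's law of inertia I can choose $A$ so that $B''$ becomes $\mathrm{diag}(1,-1)$, bringing $B$ to $\omega_1^2-\omega_2^2+\mu'\omega_3^2$ with a new $\mu'>0$. Finally, for $s\neq 0$ the dilation $\delta_s\colon X_1\mapsto sX_1,\ X_2\mapsto sX_2,\ X_3\mapsto s^2X_3$ is an automorphism (it is (\ref{aut}) with $\alpha_1=\alpha_4=s$, whence $\Delta=s^2$), and
\[
\delta_s^{*}\bigl(\omega_1^2-\omega_2^2+\mu'\omega_3^2\bigr)=s^2\omega_1^2-s^2\omega_2^2+s^4\mu'\,\omega_3^2=s^2\bigl(\omega_1^2-\omega_2^2+s^2\mu'\,\omega_3^2\bigr).
\]
Dropping the overall factor $s^2$ — this is precisely where the ``multiplicative constant'' of the statement is used — and taking $s^2=1/\mu'$, I reach $g=\omega_1^2-\omega_2^2+\omega_3^2$. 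Hence every left-invariant Lorentzian metric on $\mathbb H_3$ whose center has strictly positive length reduces, up to an automorphism of $\mathfrak h_3$ and a positive constant, to this single metric, which proves the claim.

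The bookkeeping — verifying that the maps written down satisfy (\ref{aut}) and tracking how each transforms $B$ — is routine. The one step that really needs the right idea, and that explains why the statement yields a single metric rather than a one-parameter family (in contrast with the Riemannian Theorem above, where only isometries, not homotheties, are allowed), is the last one: without the rescaling, the coefficient $\mu'$ of $\omega_3^2$ would persist as a modulus, and it is exactly the dilation automorphism $\delta_s$ composed with a homothety that removes it.
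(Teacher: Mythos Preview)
Your argument is correct and complete. The paper itself does not give a self-contained proof of this proposition: it is quoted from \cite{[D.Z]}, and the paragraph following it merely records the two structural facts you exploit (that every automorphism sends $X_3$ to $\lambda X_3$ and acts on the plane $\R\{X_1,X_2\}$ with determinant $\lambda$) before deferring to \cite{[R],[R.R]} for the trichotomy according to the sign of $\|X_3\|^2$. Your normalization --- split off the center, use the unipotent automorphisms $(\alpha_5,\alpha_6)$ to kill the mixed terms $B(X_i,X_3)$, use the $GL_2$-block to diagonalize the $(1,1)$-part via Sylvester, then absorb the remaining modulus $\mu'$ by the dilation $\delta_s$ composed with a homothety --- is exactly how one makes those structural facts into a proof, so your approach is the natural fleshing-out of the paper's sketch rather than a genuinely different route.
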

The Lie algebra $\h_ {3} $ is generated by the central vector $X_3$ and  $X_1$ and $X_2 $ such that $ [X_1, X_2] =  X_3$. The automorphisms of the Lie algebra preserve the center and then send the element $X_3$ on $\lambda X_3$, with $\lambda \in \R ^{*}$. Such an automorphism acts on the plane generated by $X_1$ and $X_2 $ as an automorphism of determinant $\lambda $. \\
It is shown in \cite {[R]} and \cite {[R.R]} that, modulo an automorphism of $\mathfrak h_ {3} $, there are three classes of invariant Lorentzian metrics on $\mathbb H
_{3}$,  corresponding to the cases where $||X_3||$ is negative, positive or zero.\\
We propose to look at the Lorentzian metrics that are associated with  the  $\Z_2^2$-symmetric structures over $\mathbb H
_3$.
If  $\mathfrak g$  is the Heisenberg algebra equipped with a $\Z^2_2 $-grading, then by automorphism, we can  reduce to the case where
$\Gamma = \Gamma_7$. In this case, the grading of $\mathfrak  h_{3} $ is given by:
\[
\mathfrak h_{3}= \mathfrak g_{0} + \mathfrak g_{+ -} + \mathfrak g_{- +} +\mathfrak g_{- -}
\]
with $\mathfrak g_{0} = \{0\},$ and
\[  \mathfrak g_{+ -} =  \R\left(X_{2} -\frac{\alpha_{6}}{2}X_{3}\right), \quad
\mathfrak g_{- +} =  \R\left(X_{1} -\frac{\alpha_{3}}{2}X_{2}+ \frac{\alpha_{5}}{2}X_{3}\right),\quad
\mathfrak g_{- -}  = \R\left(X_{3}\right).
\]
Assume $
Y_{1} = X_{1} -\frac{\alpha_{3}}{2}X_{2}+ \frac{\alpha_{5}}{2}X_{3}, \; Y_{2} = X_{2} -\frac{\alpha_{6}}{2}X_{3}, \; Y_{3} = X_{3}.
$
The dual basis is
\[
\vartheta_{1} = \omega_{1} \qquad \vartheta_{2} = \omega_{2} + \frac{\alpha_{3}}{2} \omega_{1} \qquad \vartheta_{3} = \omega_{3} -\frac{\alpha_{6}}{2}\omega_{2} -\left(\frac{\alpha_{3}\alpha_{6}}{4} + \frac{\alpha_{5}}{2}\right)\omega_{1}
\]
where $\{\omega_1,\omega_2,\omega_3\}$ is the dual basis of the base $\{X_1,X_2,X_3\}$.\\
\medskip

\noindent {\bf Case $I$} The components $ \mathfrak g_{+ -},\, \mathfrak g_{- +}, \,\mathfrak g_{- -}$ are non-degenerate. The quadratic form induced on $\mathfrak  h_3$ therefore writes
\[
g = \lambda_{1}\omega^{2}_{1} +
\lambda_{2}\left(\omega_{2} + \frac{\alpha_{3}}{2} \omega_{1}\right)^{2} +
\lambda_{3} \left(\omega_{3} - \frac{\alpha_{6}}{2} \omega_{2} -
\left(\frac{\alpha_{5}}{2} + \frac{\alpha_{3} \alpha_{6}}{4}\right)\omega_{1}\right)^{2}
\]
with $\lambda_{1},\lambda_{2},\lambda_{3} \neq 0$.
The change of basis associated with the matrix
\[
\begin{pmatrix}
1&0&\\
\frac{\alpha_{3}}{2}&1&0\\
-\frac{\alpha_{5}}{2} - \frac{\alpha_{3} \alpha_{6}}{4}&- \frac{\alpha_{6}}{2}&1
\end{pmatrix}
\]
is an automorphism. Thus $ g $ is isometric to
\[
g = \lambda_{1}\omega^{2}_{1} +
\lambda_{2}\omega^{2}_{2} + \lambda_{3}\omega^{2}_{3} .
\]
Since the signature is $(2,1) $ one of the $\lambda_ i$ is negative and the two others positive.

\begin{proposition}
Every Lorentzian metric $\Z^2_2 $-symmetric $ g $ on $\mathbb H
_3$ such that the components of the grading of $\mathfrak {h}_3 $ are non degenerate, is
reduced to one of these two forms:
$$
g = - \omega^{2}_1 + \omega^{2}_2 + \lambda^{2} \omega^{2}_3 \qquad {\rm or} \qquad
g = \omega^{2}_1 + \omega^{2}_2  - \lambda^{2} \omega^{2}_3
$$

\end{proposition}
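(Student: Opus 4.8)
The plan is to start from the normal form already reached just above: by that computation $g$ is isometric to $g = \lambda_1\omega_1^2 + \lambda_2\omega_2^2 + \lambda_3\omega_3^2$ where, the signature being $(2,1)$, exactly one of $\lambda_1,\lambda_2,\lambda_3$ is negative and the other two positive. What remains is to normalize these three values, and for this I would use only two families of automorphisms of $\h_3$, read off from the matricial description (\ref{aut}): the diagonal automorphisms $\delta_{a,b}$ ($a,b \in \R^*$) sending $X_1 \mapsto aX_1$, $X_2 \mapsto bX_2$, hence $X_3=[X_1,X_2]\mapsto abX_3$ (so $\Delta = ab$); and the flip $\phi$ sending $X_1\mapsto X_2$, $X_2\mapsto X_1$, $X_3\mapsto -X_3$ (so $\Delta=-1$). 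Both are compatible, up to equivalence, with the grading $\Gamma_7(0,0,0)$: $\delta_{a,b}$ fixes each homogeneous line $\R X_i$, whereas $\phi$ interchanges $\g_{+-}$ and $\g_{-+}$ and fixes $\g_{--}$, hence induces an equivalence of $\Z_2^2$-gradings after composing with the automorphism of $\Z_2^2$ that swaps the labels $+-$ and $-+$. On the dual side $\delta_{a,b}$ acts by $\omega_1\mapsto a\omega_1$, $\omega_2\mapsto b\omega_2$, $\omega_3\mapsto ab\omega_3$, and $\phi$ interchanges $\omega_1,\omega_2$ and sends $\omega_3\mapsto -\omega_3$.

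Next I would distinguish three cases according to which coefficient is negative. If $\lambda_3<0$ (so $\lambda_1,\lambda_2>0$), applying $\delta_{a,b}$ with $a=\lambda_1^{-1/2}$, $b=\lambda_2^{-1/2}$ turns $g$ into $\omega_1^2+\omega_2^2+\frac{\lambda_3}{\lambda_1\lambda_2}\omega_3^2 = \omega_1^2+\omega_2^2-\lambda^2\omega_3^2$ with $\lambda^2 = -\lambda_3/(\lambda_1\lambda_2)>0$: this is the second form. If $\lambda_1<0$ (so $\lambda_2,\lambda_3>0$), applying $\delta_{a,b}$ with $a=|\lambda_1|^{-1/2}$, $b=\lambda_2^{-1/2}$ gives $\lambda_1a^2=-1$, $\lambda_2b^2=1$ and $\lambda_3(ab)^2 = \lambda_3/(|\lambda_1|\lambda_2)>0$, so $g$ becomes $-\omega_1^2+\omega_2^2+\lambda^2\omega_3^2$: the first form. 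Finally, if $\lambda_2<0$, applying $\phi$ first interchanges the coefficients of $\omega_1^2$ and $\omega_2^2$, which reduces matters to the previous case, and we again land on the first form. This exhausts all possibilities and yields the stated list.

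There is essentially no obstacle here: once the two families of automorphisms are in hand the argument is a short finite case check, and the only point deserving attention is the verification — immediate from (\ref{aut}) and the explicit description of $\Gamma_7(0,0,0)$ — that $\delta_{a,b}$ and $\phi$ are genuine Lie-algebra automorphisms compatible, up to equivalence, with the $\Z_2^2$-symmetric structure. I would close with the remark that the two normal forms are genuinely distinct: every automorphism $\tau$ of $\h_3$ satisfies $\tau(X_3)=\Delta X_3$, hence multiplies $g(X_3,X_3)$ by $\Delta^2>0$, so the causal character of the center $\R X_3$ — positive length in the first form, negative in the second — is an invariant, and the two cases cannot be merged.
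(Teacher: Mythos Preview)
Your proof is correct and follows the same strategy as the paper: reduce first to the diagonal form $\lambda_1\omega_1^2+\lambda_2\omega_2^2+\lambda_3\omega_3^2$ and then normalize the coefficients by automorphisms of $\h_3$. The paper is in fact terser than you are here --- after observing that exactly one $\lambda_i$ is negative it simply states the proposition, leaving the explicit rescalings and the $\omega_1\leftrightarrow\omega_2$ swap implicit --- so your spelling out of $\delta_{a,b}$, $\phi$, and the invariance of the causal type of the center is a welcome elaboration rather than a departure.
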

\medskip
\noindent {\bf Case $II$} Suppose that a component is degenerate.
When this component is $\R (X_2 +\frac {\alpha_6}{2} X_3) $ or $\R (X_1 -\frac {\alpha_3} {2} X_2 +\frac{\alpha_5}{2}  X_3 ) $ then, by automorphism, it reduces to the above case.\\
Suppose then that the component containing the center is degenerate.\\
Thus the quadratic form induced on $\mathfrak h_3$ is written
$$g = \omega^{2}_{1} +
\left[\omega_{3} - \frac{\alpha_{6}}{2} \omega_{2} -
\frac{2\alpha_{5} + \alpha_{3} \alpha_{6}}{4} \omega_{1}\right]^{2}
- \left[\omega_{2} - \omega_{3} + \frac{\alpha_{6}}{2} \omega_{2} +
\frac{2\alpha_{5}+ \alpha_{3} \alpha_{6}}{4}\omega_{1}\right]^{2}.$$

The change of basis associated with the matrix
\[
\begin{pmatrix}
1&0&\\
\frac{\alpha_{3}}{2}&1&0\\
-\frac{\alpha_{5}}{2} - \frac{\alpha_{3} \alpha_{6}}{4}&- \frac{\alpha_{6}}{2}&1
\end{pmatrix}
\]
is given by an automorphism. Thus $g$ is isomorphic to
\[
g = \omega^{2}_{1} +  \omega^{2}_{3} -
(\omega_{2} - \omega_{3})^{2}.
\]
\begin{proposition}
Every Lorentzian  $\Z^2_2 $-symmetric $ g $ metric on $\mathbb H_3$ such that the component of the grading of $\mathfrak {h} _3 $ containing the center is degenerate, is reduced to the form
\[ g = \omega^{2}_{1} +  \omega^{2}_{3} -
(\omega_{2} - \omega_{3})^{2}.
\]
\end{proposition}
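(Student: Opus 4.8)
The plan is to argue exactly as in Case $I$: reduce the statement to a normal-form question for left-invariant metrics on $\HH_3$, and then carry out the normalization with explicit automorphisms of $\h_3$. By the Proposition above, every $\Z_2^2$-grading of $\h_3$ is equivalent to $\Gamma_7(0,0,0)$, so I may assume the grading is $\h_3=\{0\}\oplus\R X_1\oplus\R X_2\oplus\R X_3$, with $\g_{+-}=\R X_2$, $\g_{-+}=\R X_1$ and $\g_{--}=\R X_3$ the centre. The decisive point is $\g_0=\{0\}$: then $H=\{e\}$, $M=\HH_3/\{e\}=\HH_3$, the $\ad\g_0$-invariance condition on the bilinear form $B$ is vacuous, and a $G$-invariant pseudo-Riemannian metric on $M$ is nothing but a left-invariant one, i.e.\ an arbitrary non-degenerate symmetric bilinear form on $\h_3$; moreover two such forms yield isometric metrics as soon as they are conjugate under $Aut(\h_3)$. (The case where the degenerate component is one of the two non-central ones has already been reduced to Case $I$ in the discussion preceding the statement, so only the present case remains.)

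Next I would translate the hypotheses into conditions on the matrix of $B$ in the basis $\{X_1,X_2,X_3\}$: degeneracy of $\g_{--}$ means $B(X_3,X_3)=0$, while orthogonality and non-degeneracy of $\g_{+-}$ and $\g_{-+}$ mean $B(X_1,X_2)=0$ and $B(X_1,X_1)\neq 0\neq B(X_2,X_2)$. Since the automorphism $X_1\leftrightarrow X_2,\ X_3\mapsto -X_3$ of $\h_3$ exchanges $\g_{+-}$ and $\g_{-+}$, I may assume the component $\g_{\lambda_1}$ appearing in the signature hypothesis is $\g_{+-}$; because $B(X_3,X_3)=0$, the restriction of $B$ to $\g_{--}\oplus\g_{+-}$ has determinant $-B(X_2,X_3)^2$, hence signature $(1,1)$ precisely when $B(X_2,X_3)\neq 0$. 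So $B$ is, in this basis, an arbitrary symmetric matrix with zeros in the $(3,3)$ and $(1,2)$ entries, with $B(X_1,X_1),B(X_2,X_2),B(X_2,X_3)$ all nonzero and global signature $(2,1)$.

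To pin down the isometry class I would exhibit an automorphism $\Phi\in Aut(\h_3)$ for which $\Phi^{\ast}B$ equals the claimed form. First, an automorphism fixing $X_2$ and $X_3$ and replacing $X_1$ by a suitable $X_1+pX_2+qX_3$ makes this vector $B$-orthogonal to both $X_2$ and $X_3$ (the coefficients $p,q$ are uniquely determined since $B(X_2,X_3)\neq 0$); in the new basis $B$ splits as the orthogonal sum of a line and the hyperbolic plane spanned by $X_2,X_3$, and, the total signature being $(2,1)$ while the plane carries $(1,1)$, that line is positive. A short sequence of shears $X_2\mapsto X_2+\nu X_3$, the grading involution $\tau_1(0,0)\colon X_1\mapsto -X_1,\ X_3\mapsto -X_3$, and rescalings of $X_1,X_2,X_3$ --- each an automorphism of $\h_3$ --- then normalizes the three nonzero coefficients to $1$, $-1$, $1$. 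Composing them gives $\Phi$ with $\Phi^{\ast}B=\omega_1^2-\omega_2^2+2\omega_2\omega_3=\omega_1^2+\omega_3^2-(\omega_2-\omega_3)^2$, so $g$ is isometric to the stated metric. (One could equally keep the grading in the general form $\Gamma_7(\alpha_3,\alpha_5,\alpha_6)$, exactly as in Case $I$, at the price of the extra automorphism used there together with one more shear.)

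The main obstacle is this last normalization: one must be certain that the four parameters $B(X_1,X_1),B(X_2,X_2),B(X_1,X_3),B(X_2,X_3)$ genuinely collapse to a single $Aut(\h_3)$-orbit, with no surviving modulus --- in contrast with Case $I$, where a parameter $\lambda$ persists. The dimension count is favourable (a four-parameter family acted on by the six-dimensional group $Aut(\h_3)$, whose isotropy at such a metric is two-dimensional), and the rigidity comes from the fact that the null line $\R X_3$ furnishes exactly the extra ``boost'' automorphisms that are unavailable when every graded component is non-degenerate; making this explicit, and ruling out a discrete (e.g.\ sign) invariant, is the part that requires care.
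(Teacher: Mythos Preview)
Your proposal is correct and takes essentially the same route as the paper: both reduce the metric to the stated normal form via automorphisms of $\h_3$, the paper by keeping the general grading $\Gamma_7(\alpha_3,\alpha_5,\alpha_6)$ and applying the single change-of-basis automorphism that carries the $\vartheta$-frame back to the $\omega$-frame, you by first normalizing the grading to $\Gamma_7(0,0,0)$ and then killing the remaining coefficients with explicit shears and rescalings. Your treatment is in fact more detailed on the last step --- the paper simply writes the metric with coefficients already set to $(1,1,-1)$ in the adapted basis and leaves the absence of a surviving modulus to the cited Lorentzian classifications, whereas you address it directly.
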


\begin{corollary} A Lorentzian tensor $g$ on $\mathbb H_3 $ determines a $\Z^2_2 $-symmetric Lorentzian structure 
 over $\mathbb H
_3 $ if and only if it is a left-invariant Lorentzian metric on $\mathbb H_3 $.
\end{corollary}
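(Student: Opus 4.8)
\medskip
\noindent\emph{Proof proposal.} The plan is to deduce the corollary from the two Propositions of this subsection (the Case $I$ and Case $II$ classifications of Lorentzian $\Z_2^2$-symmetric metrics) together with the classification of left-invariant Lorentzian metrics on $\HH_3$ recalled above from \cite{[R]}, \cite{[R.R]} (see also \cite{[D.Z]}), exactly in the way the Riemannian corollary follows from the Riemannian $\Z_2^2$ classification theorem and \cite{[G.P1]}. For the "only if" direction there is nothing to compute: since $H=\{e\}$ we have $\HH_3/H=\HH_3$ and the canonical action of $\HH_3$ on itself is by left translations, so by the very definition of a $\Z_2^2$-symmetric Lorentzian structure the metric $g$ must be left-invariant.

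For the converse I would first observe that the property of carrying a $\Z_2^2$-symmetric Lorentzian structure is stable under the two operations that occur in the classification of left-invariant metrics. Under pull-back by an automorphism $\phi$ of $\HH_3$ one transports the grading by $(\phi_*)^{-1}$, and conditions (1) and (2) of the definition are preserved because $\phi_*$ is a linear isometry from $(\h_3,\phi^*g)$ to $(\h_3,g)$; under a homothety $g\mapsto c\,g$ with $c>0$ neither orthogonality, nor degeneracy, nor the signature of a restriction changes. It then suffices to check that each of the three normal forms of \cite{[R]}, \cite{[R.R]} — namely $-\omega_1^2+\omega_2^2+\lambda^2\omega_3^2$, $\omega_1^2+\omega_2^2-\lambda^2\omega_3^2$ and $\omega_1^2+\omega_3^2-(\omega_2-\omega_3)^2$, one for each sign of the length of the center $\R X_3$ — is adapted to the grading $\h_3=\{0\}\oplus\R X_1\oplus\R X_2\oplus\R X_3$. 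In the first two cases $\R X_1,\R X_2,\R X_3$ are pairwise orthogonal and non-degenerate, so condition (1) holds; in the third case $\R X_3$ is degenerate, $\R X_1$ and $\R X_2$ are non-degenerate, and the restriction of $g$ to $\R X_2\oplus\R X_3$ has matrix $\begin{pmatrix}-1&1\\1&0\end{pmatrix}$, of signature $(1,1)$, so condition (2) holds. Since these are exactly the metrics exhibited in the two Propositions, the two sides of the stated equivalence describe the same set of tensors.

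The only genuine point — and it is a mild one — is the bookkeeping that matches the trichotomy of \cite{[R]}, \cite{[R.R]} (center of positive, negative, or null length) with the output of Case $I$ and Case $II$. In particular the null-center case must be handled carefully: one verifies the $(1,1)$-signature condition as above, and one uses the remark opening Case $II$ that a degenerate \emph{non-central} component can always be moved away by an automorphism, so that the single new possibility is the one retained in the second Proposition. No analysis beyond the classification results already established in the excerpt is required.
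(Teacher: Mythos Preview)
Your proposal is correct and follows the same route as the paper: the paper's own argument is simply the observation that the classification of left-invariant Lorentzian metrics on $\HH_3$ in \cite{[C.P]} and \cite{[R.R]} coincides with the list produced by the Case~$I$ and Case~$II$ Propositions. You have merely made this correspondence explicit (including the easy ``only if'' direction via $H=\{e\}$ and the verification of condition~(2) in the null-center case), which is a faithful expansion of the paper's one-line justification.
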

The classification, up to isometry,  of left-invariant Lorentzian metrics on $\HH_3$ is described in \cite{[C.P]} and in \cite{[R.R]}.  It corresponds to the previous classification of Lorentzian $\Z_2^2$-symmetric metrics. 
\section{$\Z_2^k$-symmetric spaces based on $\mathbb{H}_{2p+1}$}

\subsection{$\Z_2^k$-gradings of $\h_{2p+1}$}
Let $\sigma$ be an involutive automorphism of the $(2p+1)$-dimensional Heisenberg algebra $\h_{2p+1}$. Let $\{X_1,\cdots,X_{2p+1}\}$ be a basis of $\h_{2p+1}$ whose structure constants are given by
$$[X_1,X_2]=\cdots=[X_{2p-1},X_{2p}]=X_{2p+1}.$$
Since the center $\R\{X_{2p+1}\}$ is invariant by $\sigma$, it is contained in an homogeneous component of the grading $\h_{2p+1}=\g_0 \oplus \g_1$ associated with $\sigma$. But for any $X \in \h_{2p+1}$, $X \neq 0$, there exists $Y \neq 0$ such that $[X,Y]=aX_{2p+1}$ with $a \neq 0$. We deduce that any $\Z_2$-grading is equivalent to one of the following:
\begin{enumerate}
  \item If $X_{2p+1} \in \g_0$, then
  \begin{itemize}
   \item $\h_{2p+1}=\R\{X_{2p+1}\} \oplus \R\{X_1,X_2,\cdots,X_{2p}\}$
   \item $\h_{2p+1}=\R\{X_1,X_2,X_3,\cdots,X_{2k},X_{2p+1}\} \oplus \R\{X_{2k+1},X_{2k+2},\cdots,X_{2p}\}$
                                            \end{itemize}
  \item If $X_{2p+1} \in \g_1$, then
\begin{itemize}
  \item $\h_{2p+1}=\R\{X_1,X_3,\cdots,X_{2p-1}\} \oplus \R\{X_{2},X_{4},\cdots,X_{2p},X_{2p+1}\}$
  \item $\h_{2p+1}=\R\{X_2,X_4,\cdots,X_{2p}\} \oplus \R\{X_{1},X_{3},\cdots,X_{2p-1},X_{2p+1}\}.$
\end{itemize}
\end{enumerate}
Let $\h_{2p+1}=\bigoplus_{\gamma \in \Z_2^k}\g_\gamma$ be a $\Z_2^k$-grading of the Heisenberg algebra. The support of this grading is the subset $\{\gamma \in \Z_2^k, \ \g_\gamma \neq 0\}.$ We will say that this grading is irreducible if the subgroup of $\Z_2^k$ generates by its support is the full group $\Z_2^k$.
\begin{lemma}
If $\h_{2p+1}$ admits an irreducible $\Z_2^k$-grading, then $k=1$ or $k=2$.
\end{lemma}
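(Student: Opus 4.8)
The plan is to exploit the special structure of $\h_{2p+1}$ — namely that the center is one-dimensional and that the bracket descends to a nondegenerate alternating form on the quotient $\h_{2p+1}/\R\{X_{2p+1}\}$. Write $Z = X_{2p+1}$ for the central generator and $V = \h_{2p+1}/\R\{Z\}$, a $2p$-dimensional space carrying the symplectic form $\bar\omega$ induced by the Lie bracket. Since every automorphism of $\h_{2p+1}$ preserves the center, each $\si \in \hat\Gamma$ acts on $Z$ by a scalar; because $\si$ is involutive this scalar is $\pm 1$, and because $\si$ preserves $\bar\omega$ up to that scalar, the induced map $\bar\si$ on $V$ is a symplectic-similitude involution with multiplier $\pm 1$. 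Thus the whole group $\hat\Gamma \cong \Z_2^k$ maps to a group of such involutions on $V$, and there is a character $\chi : \Z_2^k \to \{\pm 1\}$ recording the action on $Z$.

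The key step is to analyze the simultaneous eigenspace decomposition $V = \bigoplus_{\delta} V_\delta$ of the commuting involutions $\bar\si$, indexed by characters $\delta$ of $\Z_2^k$. For $X \in V_\delta$ and $Y \in V_{\delta'}$ one has $\bar\omega(X,Y) \in$ the $\chi$-eigenline, and since the $\bar\si$ are similitudes with multiplier $\chi(\gamma)$, the pairing $\bar\omega(V_\delta, V_{\delta'})$ vanishes unless $\delta\delta' = \chi$. Nondegeneracy of $\bar\omega$ then forces, for every $\delta$ in the support, the "partner" character $\delta\chi$ also to lie in the support and $\bar\omega$ to pair $V_\delta$ with $V_{\delta\chi}$ nondegenerately. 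Now translate back to $\h_{2p+1}$: its homogeneous components are $\g_\epsilon$ (containing or not containing $Z$) together with lifts of the $V_\delta$, so the support $S$ of the grading of $\h_{2p+1}$ is contained in $\{$the character carrying $Z\} \cup \{\delta : V_\delta \neq 0\}$. The irreducibility hypothesis says $S$ generates $\Z_2^k$.

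Finally I would run the counting argument. Suppose $k \geq 3$. Because $\bar\omega$ pairs $V_\delta$ with $V_{\delta\chi}$, the nonzero $V_\delta$ come in pairs $\{\delta, \delta\chi\}$ (if $\chi \neq \epsilon$) all lying in the single coset determined by the condition $\delta\delta' = \chi$, i.e. in a coset of the subgroup $\ker\chi$; if $\chi = \epsilon$ they lie in $\ker\chi = \Z_2^k$ but then each $V_\delta$ must be $\bar\omega$-nondegenerate by itself and there is no constraint linking distinct $\delta$'s — so one must look more carefully, and here the bracket relation $[\g_\gamma,\g_{\gamma'}]\subset \g_{\gamma\gamma'}$ with values in $\g$-component containing $Z$ is what pins things down. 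In either case the support, together with the one extra character from $Z$, can generate at most a group of rank $2$: with $\chi\neq\epsilon$ the $V_\delta$-characters sit in a single coset of $\ker\chi$ and two elements of a coset of a subgroup $K$ generate at most $K$ extended by one element, i.e. rank $\le \operatorname{rank}(K)+1$, and forcing this to be all of $\Z_2^k$ while the $V$-part is confined to one $\ker\chi$-coset bounds $k$; with $\chi=\epsilon$, $Z$ contributes nothing new and the symplectic structure on $V$ with commuting involutions again caps the number of independent characters at $2$. The main obstacle is making the $\chi=\epsilon$ case airtight: there the eigenspaces $V_\delta$ are individually nondegenerate and a priori $\delta$ ranges freely, so I expect the real work is to show that compatibility of the Lie bracket (not just the symplectic form) — specifically that $[X_\delta, X_{\delta'}]$ must be a multiple of $Z$ and hence lands in whichever component contains $Z$, which has character $\epsilon$ — forces $\delta\delta' = \epsilon$ for all $\delta,\delta'$ in the support, collapsing the support to a subgroup of order $\le 2$ in $V$ and giving $k \le 2$ overall. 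Combined with the already-exhibited $\Z_2^2$-grading on $\h_3$ (and its obvious analogue on $\h_{2p+1}$), this shows $k \in \{1,2\}$ and the $k=2$ case is realized, completing the proof.
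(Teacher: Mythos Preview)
Your argument has a genuine gap in the counting step, and in fact the gap cannot be repaired because the lemma as stated is false for $p\ge 2$. On $\h_5$ (basis $X_1,\dots,X_5$ with $[X_1,X_2]=[X_3,X_4]=X_5$) the assignment
\[
\g_a=\R X_1,\quad \g_{ac}=\R X_2,\quad \g_b=\R X_3,\quad \g_{bc}=\R X_4,\quad \g_c=\R X_5,
\]
with $a,b,c$ the standard generators of $\Z_2^3$ and all other components zero, is a bona fide $\Z_2^3$-grading whose support $\{a,ac,b,bc,c\}$ generates the whole group. More generally $\h_{2p+1}$ carries irreducible $\Z_2^{p+1}$-gradings by the obvious extension of this construction, so the correct bound is $k\le p+1$, not $k\le 2$.

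The place your reasoning breaks is the claim that ``the nonzero $V_\delta$ come in pairs $\{\delta,\delta\chi\}$ \dots\ all lying in the single coset determined by the condition $\delta\delta'=\chi$''. The symplectic constraint $\bar\omega(V_\delta,V_{\delta'})=0$ unless $\delta\delta'=\chi$ only says that the support of $V$ is stable under the involution $\delta\mapsto\delta\chi$; it does \emph{not} confine the $\delta$'s to a single coset of any subgroup. In the $\h_5$ example above ($\chi=c$) the two pairs $\{a,ac\}$ and $\{b,bc\}$ are distinct cosets of $\langle c\rangle$. Your treatment of the case $\chi=\epsilon$ has the same flaw: the bracket condition does not force $\delta\delta'=\epsilon$ for all $\delta,\delta'$ in the support, it only forces $\bar\omega(V_\delta,V_{\delta'})=0$ when $\delta\ne\delta'$, which is perfectly compatible with each $V_\delta$ being symplectically nondegenerate on its own while the $\delta$'s range over many independent characters. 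The paper's one-line proof (``a consequence of the previous classification of the $\Z_2$-gradings'') fares no better; the statement is correct only for $\h_3$, where $\dim V=2$ genuinely forces the support of $V$ to be a single pair $\{\delta,\delta\chi\}$ and hence $k\le 2$.
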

In fact, this is a consequence of the previous classification of the $\Z_2$-gradings of $\h_{2p+1}$. We deduce also that any $\Z_2^2$-grading is equivalent to
 $$\h_{2p+1}=\{0\}\oplus \R\{X_{2p+1}\} \oplus \R\{X_1,X_3,\cdots,X_{2p-1}\}\oplus \R\{X_2,X_4,\cdots,X_{2p}\}.$$

\subsection{Pseudo-Riemannian symmetric spaces  $\mathbb{H}_{2p+1}/H$}

We consider the symmetric spa\-ces $\mathbb{H}_{2p+1}/H$  corresponding to the previous symmetric decomposition of $\h_{2p+1}$, where $H$ is a connected Lie subgroup of $\mathbb{H}_{2p+1}$ whose Lie algebra is $\g_0$.

\noindent$\bullet$ With the $\Z_2$-grading $\h_{2p+1}=\R\{X_{2p+1}\} \oplus \R\{X_1,X_2,\cdots,X_{2p}\}.$
Since $ad(X_{2p+1})$ is zero

\noindent  any non degenerate bilinear form on $\g_1$ defines a symmetric pseudo-riemannian metric on $\mathbb{H}_{2p+1}/H$ where $H$ is a connected one dimensional Lie Group.

\noindent$\bullet$ Consider the $\Z_2$-grading
$$\h_{2p+1}=\R\{X_1,X_2,X_3,\cdots,X_{2k},X_{2p+1}\} \oplus \R\{X_{2k+1},X_{2k+2},\cdots,X_{2p}\}$$
In this case, $H$ is a Lie subgroup isomorphic to $\mathbb{H}_{2k+1}.$ 
Since we have $[\g_0,\g_1]=0,$
any non degenerate bilinear form on $\g_1$ defines a symmetric pseudo-Riemannian metric on $\mathbb{H}_{2p+1}/\mathbb{H}_{2k+1}.$

\noindent$\bullet$ We consider the $\Z_2$-gradings
$$\begin{array}{ll}
& \h_{2p+1}=\R\{X_1,X_3,\cdots,X_{2p-1}\} \oplus \R\{X_{2},X_{4},\cdots,X_{2p},X_{2p+1}\} \\
{\rm or} & \h_{2p+1}=\R\{X_2,X_4,\cdots,X_{2p}\} \oplus \R\{X_{1},X_{3},\cdots,X_{2p-1},X_{2p+1}\}.
\end{array}$$
In this case, any bilinear form on $\g_1$ which is $ad(\g_0)$-invariant is degenerate. In fact, if $B$ is such a form, we have
$$B([X_{2k+1},X_{2k+2}],X_1)=B(X_{2p+1},X_{2p+1})=0$$
and for any $k=0,\cdots,p-1$ and $s \neq k+1$
$$B([X_{2k+1},X_{2k+2}],X_{2s})=B(X_{2p+1},X_{2s})=0, $$
and $X_{2p+1}$ is in the kernel of $B$. We have the same proof for the second grading.
\begin{proposition}
The symmetric spaces $\mathbb{H}_{2p+1}/H$ corresponding to the $\Z_2$-grading of $\h_{2p+1}$:
\begin{itemize}
  \item $\h_{2p+1}=\R\{X_1,X_3,\cdots,X_{2p-1}\} \oplus \R\{X_{2},X_{4},\cdots,X_{2p},X_{2p+1}\}$
  \item $\h_{2p+1}=\R\{X_2,X_4,\cdots,X_{2p}\} \oplus \R\{X_{1},X_{3},\cdots,X_{2p-1},X_{2p+1}\}$
\end{itemize}
are not pseudo-Riemannian symmetric spaces.
\end{proposition}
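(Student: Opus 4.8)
The plan is to reduce the assertion to a purely linear-algebraic statement about invariant bilinear forms on $\m=\g_1$ and then to exhibit the centre of $\h_{2p+1}$ as a null direction of every such form. As recalled above, on the reductive homogeneous space $\HH_{2p+1}/H$ with reductive decomposition $\h_{2p+1}=\g_0\oplus\m$, a $G$-invariant pseudo-Riemannian metric is the same datum as a non-degenerate symmetric bilinear form $B$ on $\m$ with
\[
B([Z,X],Y)+B(X,[Z,Y])=0 \qquad \text{for all } Z\in\g_0,\ X,Y\in\m ,
\]
and since $\HH_{2p+1}/H$ is a symmetric space ($[\m,\m]\subset\g_0$), such a $B$ makes it a pseudo-Riemannian symmetric space, its Levi-Civita connection being the canonical connection. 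So it suffices to prove that every $\ad(\g_0)$-invariant symmetric bilinear form $B$ on $\m$ is degenerate.

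Since the linear map $X_{2i-1}\mapsto X_{2i}$, $X_{2i}\mapsto -X_{2i-1}$ ($1\le i\le p$), $X_{2p+1}\mapsto X_{2p+1}$ is an automorphism of $\h_{2p+1}$ that interchanges the two gradings, it is enough to treat $\g_0=\R\{X_1,X_3,\dots,X_{2p-1}\}$, $\m=\R\{X_2,X_4,\dots,X_{2p},X_{2p+1}\}$ and to show $X_{2p+1}\in\ker B$. I would do this in two steps. First, for each $i\in\{1,\dots,p\}$ apply the invariance identity with $Z=X_{2i-1}\in\g_0$ and $X=Y=X_{2i}\in\m$: by symmetry of $B$ it collapses to $2B([X_{2i-1},X_{2i}],X_{2i})=0$, i.e.\ $B(X_{2p+1},X_{2i})=0$. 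Second, apply it with $Z=X_1\in\g_0$, $X=X_2\in\m$, $Y=X_{2p+1}\in\m$: here $[X_1,X_2]=X_{2p+1}$ while $[X_1,X_{2p+1}]=0$, so $B(X_{2p+1},X_{2p+1})=0$. Since $\{X_2,X_4,\dots,X_{2p},X_{2p+1}\}$ is a basis of $\m$, these relations say $B(X_{2p+1},\m)=0$, so $B$ is degenerate.

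Consequently $\m$ carries no non-degenerate $\ad(\g_0)$-invariant symmetric bilinear form, so $\HH_{2p+1}/H$ admits no $G$-invariant pseudo-Riemannian metric adapted to its symmetric structure; this is exactly the claim. There is no real obstacle: the argument amounts to the single observation that the centre of the Heisenberg algebra is a null line for every invariant metric on $\m$, and the only care needed is the index bookkeeping --- choosing the triples $(Z,X,Y)$ in the invariance identity so that precisely the bracket producing $X_{2p+1}$ survives. (For $p=1$ one recovers the non-existence result for $\HH_3/H$ obtained earlier.)
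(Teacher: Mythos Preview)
Your proof is correct and follows the same strategy as the paper: show that every $\ad(\g_0)$-invariant symmetric bilinear form $B$ on $\m$ has the central vector $X_{2p+1}$ in its kernel, hence is degenerate. The only minor difference is the choice of test triples in the invariance identity---the paper takes $Z=X_{2k+1}$, $X=X_{2k+2}$ and varies $Y$, whereas you use the diagonal choice $X=Y=X_{2i}$, which is slightly cleaner (it avoids needing a separate index $l\neq s-1$ and works uniformly for all $p\ge 1$); the reduction of the two gradings to one via the automorphism $X_{2i-1}\leftrightarrow \pm X_{2i}$ is also a nice touch, while the paper simply remarks that ``we have the same proof for the second grading''.
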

\subsection{Pseudo-Riemannian $\Z_2^2$-symmetric spaces  $\mathbb{H}_{2p+1}/H$}
Let us consider the 
\noindent $\Z_2^2$-gra\-ding of the Heisenberg algebra
$$\h_{2p+1}=\{0\}\oplus \R\{X_{2p+1}\} \oplus \R\{X_1,X_3,\cdots,X_{2p-1}\}\oplus \R\{X_2,X_4,\cdots,X_{2p}\}.$$
Since $\g_0=\{0\}$, then $H$ is reduced to the identity and the $\Z_2^2$-symmetric spaces  $\mathbb{H}_{2p+1}/H$ is isomorphic to $\mathbb{H}_{2p+1}$. The reductive decomposition $\h_{2p+1}=\g_0 \oplus \m$ is reduced to $\m$. Since $\g_0=\{0\}$, any bilinear definite positive form on $\m$ for which the homogeneous components $\R\{X_{2p+1}\}$, $\R\{X_1,X_3,\cdots,X_{2p-1}\}$ and $ \R\{X_1,X_4,\cdots,X_{2p}\}$ are pairwise orthogonal defines a $\Z_2^2$-symmetric Riemannian structure on  $\mathbb{H}_{2p+1}$.

The Levi-Civita connection associated with this Riemannian metric is an affine connection, that is, it is torsion-free and the curvature tensor $R$ satisfies $\nabla R=0$, where $\nabla$ is the covariant derivative of this connection. In case of Riemannian symmetric space, the Levi-Civita connection associated with the symmetric Riemannian metric corresponds to the canonical connection defined in \cite{Kobayashi-Nomizu-2} which defines the natural affine structure on a symmetric space. This is not the case for Riemannian-$\Z_2^2$-symmetric spaces. In the next section, we define a class of affine connection adapted to the  $\Z_2^2$-symmetric structures, and we prove, in case of the Riemannian $\Z_2^2$-symmetric space  $\mathbb{H}_{2p+1}/H$, that there exist adapted connection with torsion and curvature-free.

\subsection{Adapted affine connections on the $\Z_2^2$-symmetric spaces  $\mathbb{H}_{2p+1}/H$}

Let $G/H$ be a $\Z_2^k$-symmetric space. Since $G/H$ is a reductive homogeneous space, that is $\g$ admits a decomposition $\g=\g_0+\m$ with $[\g_0,\g_0] \subset \g_0$ and $[\g_0,\m] \subset \m$, any connection is given by a linear map
 $$\bigwedge : \m \rightarrow gl(\m)$$
 satisfying
 $$\displaystyle \bigwedge\left[X,Y\right]=\left[\bigwedge (X), \lambda (Y)\right]$$
 for all $X \in \m$ and $Y \in \g_0$, where $\lambda$ is the linear isotropy representation of $\g_0$. The corresponding torsion and curvature tensors are given by:
 $$\begin{array}{ll}
& T(X,Y)=\bigwedge(X)(Y) - \bigwedge(Y)(X)-[X,Y]_{\m} \\
 {\rm and} &
\displaystyle  R(X,Y)=\left[\bigwedge(X),\bigwedge(Y)\right]-\bigwedge[X,Y]-\lambda([X,Y]_{\g_0})
 \end{array}$$
 for any $X,Y \in \m.$
 \begin{definition}
Consider the affine connection on the $\Z_2^k$-symmetric space $G/H$ defined by the linear map
$$\bigwedge : \m
 \rightarrow gl(\m).$$
Then this connection is called adapted to the $\Z_2^k$-symmetric structure, if any
$$\bigwedge(X_\gamma)(\g_{\gamma'}) \subset \g_{\gamma\gamma'}$$
for any $\gamma,\gamma' \in \Z_2^k$, $\gamma,\gamma \neq 0.$ The connection is called homogeneous if any
homogeneous component $\g_{\gamma}$ of $\m$ is invariant by $\bigwedge$.
\end{definition}
Now we consider the case where $G/H=\mathbb{H}_{2p+1}/H$ is the $\Z^2_2$-symmetric space defined by the grading
$$\h_{2p+1}=\{0\}\oplus \R\{X_{2p+1}\} \oplus \R\{X_1,X_3,\cdots,X_{2p-1}\}\oplus \R\{X_2,X_4,\cdots,X_{2p}\}.$$
We have seen that $H$ is reduced to the identity and $\mathbb{H}_{2p+1}/H$ is isomorphic to $\mathbb{H}_{2p+1}.$  Consider an adapted connection and let $\bigwedge$ be the associated linear map. Since the connection is adapted to the $\Z_2^2$-symmetric structure, $\bigwedge$ satisfies:
$$
\left\{
\begin{array}{lll}
\medskip
\bigwedge(X_{2k+1})(X_{2l+1})=\bigwedge(X_{2s})(X_{2t})=0, \quad &k,l=0,\cdots,p-1,  \quad &s,t=1,\cdots,p,\\
\medskip
\bigwedge(X_{2k+1})(X_{2s})=C_s^{2k+1}X_{2p+1}, & s=1,\cdots,p, & k=0, \cdots,p-1,\\
\medskip
\bigwedge(X_{2s})(X_{2k+1})=C_k^{2s}X_{2p+1}, &s=1,\cdots,p, & k=0, \cdots,p-1,\\
\medskip
\bigwedge(X_{2k+1})(X_{2p+1})=\sum_{s=1}^p a_{2k+1}^sX_{2s}, & k=0, \cdots,p-1, &\\
\medskip
\bigwedge(X_{2s})(X_{2p+1})=\sum_{k=0}^p a_{2s}^kX_{2k+1}, & s=1, \cdots,p. &\\
\end{array}
\right.
$$
\begin{theorem}
Any adapted connection $\nabla$ on the $\Z_2^2$-symmetric space $\mathbb{H}_{2p+1}/H=\mathbb{H}_{2p+1}$ satisfies $T=0$ and $ R=0$
where $T$ and $R$ are respectively the torsion and the curvature of $\nabla$ if and only if the corresponding linear map $\bigwedge$ satisfies
$$\begin{array}{l}
\left\{
\begin{array}{ll}
\medskip
\bigwedge(X_{2k+1})(X_{2s})=C_s^{2k+1}X_{2p+1}, \quad & s=1,\cdots,p, \quad k=0, \cdots,p-1,\\
\medskip
\bigwedge(X_{2k+1})(X_{i})=0, & k=0, \cdots,p-1, \quad i \notin \{2,\cdots,2p\},
\medskip
\end{array}
\right.
\\ \medskip
\left\{
\begin{array}{ll}
\medskip
\bigwedge(X_{2s})(X_{2k+1 })=C_s^{2k+1}X_{2p+1}, \quad & s=1,\cdots,p, \qquad k=0, \cdots, p-1, k \neq s-1, \\ \medskip
\bigwedge(X_{2s})(X_{2s-1 })=(C_s^{2k+1}-1)X_{2p+1}, & s=1, \cdots,p,\\
\medskip
\bigwedge(X_{2s})(X_{i})=0, & s=1, \cdots,p, \qquad i \notin \{1,\cdots,2p-1\}.
\end{array}
\right.
\end{array}
$$
\end{theorem}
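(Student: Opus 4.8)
The plan is to reduce the statement to a linear system in the entries of $\bigwedge$ and solve it. Since here $\g_0=\{0\}$, the subgroup $H$ is trivial, $\m=\h_{2p+1}$, the linear isotropy representation $\lambda$ is identically zero, and the formulas for $T$ and $R$ recalled just above reduce to
\[
T(X,Y)=\bigwedge(X)(Y)-\bigwedge(Y)(X)-[X,Y],\qquad R(X,Y)=[\bigwedge(X),\bigwedge(Y)]-\bigwedge([X,Y]),
\]
for $X,Y\in\h_{2p+1}$, where $[\cdot,\cdot]$ is the Heisenberg bracket. I will use constantly that every bracket lies in the centre $\R X_{2p+1}$, that $X_{2p+1}$ is central, and that adaptedness makes $\bigwedge(X_{2k+1})$ send $\R\{X_2,\dots,X_{2p}\}$ into $\R X_{2p+1}$ and $\R X_{2p+1}$ into $\R\{X_2,\dots,X_{2p}\}$ while killing $\R\{X_1,\dots,X_{2p-1}\}$, with the symmetric statements for $\bigwedge(X_{2s})$ and for $\bigwedge(X_{2p+1})$; this is precisely the parametrisation by the coefficients $C^{2k+1}_s,C^{2s}_k,a^s_{2k+1},a^k_{2s}$ (together with the entries of $\bigwedge(X_{2p+1})$) displayed before the statement.

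For the ``if'' direction I would simply substitute a $\bigwedge$ of the asserted form. Then $\bigwedge(X_{2p+1})=0$, each $\bigwedge(X_{2k+1})$ maps $\R\{X_2,\dots,X_{2p}\}$ into $\R X_{2p+1}$ and annihilates everything else, and each $\bigwedge(X_{2s})$ maps $\R\{X_1,\dots,X_{2p-1}\}$ into $\R X_{2p+1}$ and annihilates everything else. Hence the image of any $\bigwedge(Y)$ lies in $\R X_{2p+1}$, which every $\bigwedge(X)$ kills; so all products $\bigwedge(X)\bigwedge(Y)$ vanish, $[\bigwedge(X),\bigwedge(Y)]=0$, and $\bigwedge([X,Y])=0$ since $[X,Y]\in\R X_{2p+1}$; therefore $R\equiv 0$. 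For $T\equiv 0$ only the pairs $(X_{2k+1},X_{2s})$ need checking: if $2s\neq 2k+2$ both $\bigwedge$-terms contribute $C^{2k+1}_s X_{2p+1}$ and the bracket vanishes, and if $2k+1=2s-1$ one gets $\bigl(C^{2s-1}_s-(C^{2s-1}_s-1)-1\bigr)X_{2p+1}=0$; all other pairs give $0$ at once.

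For the ``only if'' direction, assume $T\equiv 0$ and $R\equiv 0$. First, $T\equiv 0$ on the pairs $(X_{2k+1},X_{2p+1})$ and $(X_{2s},X_{2p+1})$, together with centrality of $X_{2p+1}$, forces $\bigwedge(X_{2p+1})(X_{2k+1})=\bigwedge(X_{2k+1})(X_{2p+1})$ and $\bigwedge(X_{2p+1})(X_{2s})=\bigwedge(X_{2s})(X_{2p+1})$, so $\bigwedge(X_{2p+1})$ is entirely determined by the coefficients $a^s_{2k+1},a^k_{2s}$; and $T\equiv 0$ on the pairs $(X_{2k+1},X_{2s})$ gives $C^{2s}_k=C^{2k+1}_s$ for $s\neq k+1$ and $C^{2s}_{s-1}=C^{2s-1}_s-1$. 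I would then substitute all this into $R$ and evaluate $R(X_{2k+1},X_{2s})$, $R(X_{2k+1},X_{2p+1})$ and $R(X_{2s},X_{2p+1})$ on the basis. The resulting identities yield, for each individual coefficient $a^s_{2k+1}$ (resp.\ $a^k_{2s}$), two relations of the shape $\mu\,a=0$ and $\mu'\,a=0$ with $\mu-\mu'$ a nonzero integer --- in the model case $\mathbb H_3$ these scalars are $C$ and $C-2$ (resp.\ $C-1$ and $C+1$) --- whence every $a$-coefficient vanishes. With all $a$'s zero, $\bigwedge(X_{2p+1})=0$, the surviving relations among the $C$'s are exactly the two just obtained, and by the computation of the ``if'' part the curvature is then automatically zero, so nothing further is imposed; this is the asserted normal form.

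The main obstacle is the ``only if'' direction, concretely the step ``$R\equiv 0\Rightarrow a\equiv 0$'' carried out uniformly in $p$: one has to pick, for each $a$-coefficient, the right pair of curvature components and verify that the two scalar prefactors differ by a nonzero constant in every case. The $\mathbb H_3$ computation exhibits the mechanism completely, and for general $p$ one can either push the index bookkeeping through directly or induct on $p$, using that the $\Z_2^2$-grading of $\h_{2p+1}$ is built from $p$ commuting copies of the $\h_3$-grading; the rest is routine verification in the fixed basis.
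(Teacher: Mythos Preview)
Your proposal is correct and follows the same strategy as the paper: first impose $T=0$ on an adapted $\bigwedge$ to obtain the relations $C^{2s}_k=C^{2k+1}_s$ (for $s\neq k+1$), $C^{2s}_{s-1}=C^{2s-1}_s-1$, and $\bigwedge(X_{2p+1})X_i=\bigwedge(X_i)X_{2p+1}$, and then impose $R=0$ to kill the remaining $a$-coefficients. The paper's own proof is in fact only a one-sentence sketch (``we determine in a first step all the adapted connections which are torsion-free\ldots''), so your write-up is already more detailed than what appears there; in particular your explicit $\mathbb H_3$ computation exhibiting the pairs of scalars $(C,C-2)$ and $(C-1,C+1)$ is exactly the mechanism that forces each $a$-coefficient to vanish, and this is the content the paper leaves implicit.
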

In fact, we determine in a first step, all the connection adapted to the  $\Z_2^2$-symmetric structure and which are torsion-free. In this case, $\bigwedge$ satisfies
$$\bigwedge(X)(Y)-\bigwedge(Y)(X)-[X,Y]=0, \ {\rm for \ any } \ X,Y \in \h_{2p+1}.$$ 



%
\bigskip
\end{document}